\newtheorem{thm}[equation]{Theorem}
\let\c@subsubsection\c@equation
\newtheorem{prop}[equation]{Proposition}
\theoremstyle{remark}
\newtheorem{rmk}[equation]{Remark}
\theoremstyle{definition}
\newcommand{\Hom}{\mathrm{Hom}}
\newcommand{\inthomeff}{\mathbf{hom}^{\mathrm{eff}}}
\newcommand{\krulldim}{\mathrm{dim}}
\newcommand{\spec}[1]{\mathrm{Spec}(#1)}
\newcommand{\sphere}{\mathbf 1}
\newcommand{\DMeff}[1]{DM^{\mathrm{eff}}_{#1}}
\newcommand{\DM}[1]{DM_{#1}}
\newcommand{\HINST}[1]{\mathbf{HI}_{#1}}
\numberwithin{equation}{subsection}
\begin{document}


\title{Regular homomorphisms and Mixed Motives}



\author{Ivan Hernandez}
\address{Instituto de Matem\'aticas, Ciudad Universitaria, UNAM, DF 04510, M\'exico}
\email{ivan\_e@ciencias.unam.mx}

\author{Pablo Pelaez}
\address{Chebyshev Laboratory\\
St. Petersburg State University\\
14th Line V. O., 29B\\
Saint Petersburg 199178 Russia}
\email{pablo.pelaez@gmail.com}


\subjclass[2010]{Primary 14C15, 14C25, 19E15}

\keywords{Abelian Equivalence, Abel-Jacobi Equivalence,
Chow Groups, Incidence Equivalence,
Mixed Motives, Regular Homomorphisms, Slice Filtration}


\begin{abstract}
Let $X$ be a smooth projective variety of dimension $d$ over an
algebraically closed field $k$.
The main goal of this paper is to study, in the context of Voevodsky's
triangulated category of motives $\DM{k}$,
the group $CH^n_{\mathrm{alg}}(X)$
of codimension $n$ algebraic cycles of $X$, 
algebraically equivalent to zero, modulo
rational equivalence, $1\leq n \leq d$.

Namely, for any regular homomorphism $\psi$ (in the sense of Samuel)
defined on $CH^n_{\mathrm{alg}}(X)$,
we construct $M^n_{\psi}(X)\in \DM{k}$, which is a
reasonable approximation, with respect
to the slice filtration in $\DM{k}$,
of the motive of $X$, $M(X)$; and
a map $z_\psi : M^n_{\psi}(X)\rightarrow M(X)$ in
$\DM{k}$, which computes the kernel of $\psi$.  
 We construct as well
a map, $z_{\mathrm{ab}}^n: M^n_{\mathrm{ab}}(X) \rightarrow M(X)$
having analogue properties but which instead computes the subgroup
$CH^n_{\mathrm{ab}}(X)\subseteq
CH^n_{\mathrm{alg}}(X)$ of algebraic cycles abelian
equivalent to zero (in the sense of Samuel).
\end{abstract}

\thanks{The second author was supported by the Russian Science
Foundation grant 19-71-30002.}
 
\maketitle

\section{Introduction}  \label{sec.introd}

\subsection{}
Let $k$ be an algebraically closed base field, and $X$ a smooth projective
variety over $k$ of dimension $d$.  We will write $CH^n(X)$ for the group
of codimension $n$ algebraic cycles of $X$ modulo rational equivalence, 
$1\leq n\leq d$, and $CH^n_{\mathrm{alg}}(X)\subseteq CH^n(X)$ for the
subgroup of algebraic cycles which are algebraically equivalent to zero.

A classical method for the study of $CH^n_{\mathrm{alg}}(X)$ consists in
the analysis of
subgroups of $CH^n_{\mathrm{alg}}(X)$ which are
parametrized by abelian varieties
\cite{MR0424807}*{pp. 445-446}.  Namely, one considers the kernel of
regular homomorphisms $\psi :CH^n_{\mathrm{alg}}(X) \rightarrow A(k)$
in the sense of Samuel \cite{MR116010}*{p. 477}, \eqref{def.reg.hom};
where $A(k)$ is the group of $k$-rational
points for a given abelian variety $A$ over $k$.  Noteworthy examples are the
Albanese map $\psi = \mathrm{alb}_X : CH^d_{\mathrm{alg}}(X)
\rightarrow \mathrm{Alb}(X) (k)$ for zero cycles in $X$, the natural
homomorphism $\psi  : CH^1_{\mathrm{alg}}(X)
\rightarrow \mathrm{Pic}^0(X) (k)$ for codimension $1$ cycles in $X$, and
in case $k=\mathbb C$, $1<n<d$, Weil's Abel-Jacobi map
$\psi _n : CH^n_{\mathrm{alg}}(X)
\rightarrow J^n_a(X) (\mathbb C)$ into the algebraic part of the Weil-Griffiths 
intermediate Jacobian $J^n(X)$, see \eqref{ss.int.jac}.  The kernel of
$\psi _n$ is the subgroup $CH^n_{AJ}(X)\subseteq CH^n_{\mathrm{alg}}
(X)$ of algebraic
cycles Abel-Jacobi equivalent to zero.

In this work we study regular homomorphisms
in the context of Voevodsky's triangulated category of motives.
We will write $\DMeff{k}$ for Voevodsky's triangulated category of 
effective motives; and $\sphere$,
$M(X)\in \DMeff{k}$, respectively, 
for the motive of $\spec{k}$, $X$ \eqref{ss.DMdef}. 
Let $\mathbb P ^m$ denote the $m$-dimensional projective space over $k$,
and let $\sphere(r)[2r]\in \DMeff{k}$, $0\leq r \leq m$,  be the direct summands
in the decomposition of $M(\mathbb P ^m)$ in $\DMeff{k}$
\cite{MR1764202}*{Prop. 3.5.1}, \eqref{ss.DMdef}.

Consider a regular homomorphism
$\psi :CH^n_{\mathrm{alg}}(X) \rightarrow A(k)$.  In our first main result \eqref{thm1},
\eqref{thm2} we construct a map
$z_\psi : M^n_{\psi}(X)\rightarrow M(X)$ in  $\DMeff{k}$ which satisfies two
properties: 
\begin{enumerate}
\item It is a reasonable approximation of $M(X)$ in the following sense:
for $r\geq 1$, the maps $f_{d-n+r}(z_\psi)$, $s_{d-n+r}(z_\psi)$ are 
isomorphisms in $\DMeff{k}$, where $f_m$, $m\in \mathbb Z$, 
(resp. $s_m$) is the $(m-1)$-effective
cover (resp. the $m$-slice) of Voevodsky's slice filtration
\eqref{eq.slice.filtration}-\eqref{diag.slice.tr}.

\item It computes the kernel of $\psi$.  Namely, under
the isomorphism $\Hom_{\DMeff{k}}(\sphere (d-n)[2d-2n], M(X))\cong
CH^n(X)$ \cite{MR1764202}*{Prop. 2.1.4, Thm. 3.2.6}, the image of the induced
map $z_{\psi \ast}:\Hom_{\DMeff{k}}(\sphere (d-n)[2d-2n], M^n_{\psi}(X))
\rightarrow \Hom_{\DMeff{k}}(\sphere (d-n)[2d-2n], M(X))$ is the
kernel of the regular homomorphism $\psi : CH^n_{\mathrm{alg}}(X) \rightarrow
A(k)$.
\end{enumerate}

We also study the subgroup $CH^n_{\mathrm{ab}}(X)
\subseteq CH^n_{\mathrm{alg}}(X)$ of algebraic cycles abelian equivalent
to zero \eqref{def.ab.eq}, which consists of those elements in the kernel
of every regular homomorphism $\psi :CH^n_{\mathrm{alg}}(X) \rightarrow
A(k)$.  When $k=\mathbb C$, $CH^n_{\mathrm{alg}}(X)\subseteq
CH^n_{AJ}(X)$ and the natural map:
$CH^n_{\mathrm{alg}}(X)/CH^n_{\mathrm{ab}}(X)\rightarrow
CH^n_{\mathrm{alg}}(X)/CH^n_{AJ}(X)$ is conjecturally an isogeny
\cite{MR0424807}*{p. 447}.  The conjecture holds for $n=1$, $d$ (in this
situation both groups are equal by the classsical theory of the Picard and
Albanese varieties) and for $n=2$ \cite{MR777590}*{p. 982 Cor.},
\cite{MR805336}*{p. 229 Thm. 1.11.1} (where equality holds as well).

In our second main result \eqref{thm.1.ab}, \eqref{thm2.ab} we
construct a map $z_{\mathrm{ab}}^n: M^n_{\mathrm{ab}}(X) \rightarrow M(X)$
in $\DMeff{k}$ satisfying properties similar to those of $z_\psi$ above.
Namely,
\begin{enumerate}
\item  For $r\geq 1$,
the maps $f_{d-n+r}(z_{\mathrm{ab}}^n)$, $s_{d-n+r}(z_{\mathrm{ab}}^n)$ are 
isomorphisms in $\DMeff{k}$.
\item Under
the isomorphism $\Hom_{\DMeff{k}}(\sphere (d-n)[2d-2n], M(X))\cong
CH^n(X)$ \cite{MR1764202}*{Prop. 2.1.4, Thm. 3.2.6}, the image of the induced
map $z_{\mathrm{ab} \ast}^n:\Hom_{\DMeff{k}}(\sphere (d-n)[2d-2n], 
M^n_{\mathrm{ab}}(X))
\rightarrow \Hom_{\DMeff{k}}(\sphere (d-n)[2d-2n], M(X))$ is the subgroup
of algebraic cycles abelian equivalent to zero:
$CH^n_{\mathrm{ab}}(X)
\subseteq CH^n_{\mathrm{alg}}(X)$.
\end{enumerate}

In a future work, we will apply the results in this paper to the study
of the orthogonal filtration $F^\bullet$ on the Chow groups $CH^n(X)_{\mathbb Q}$
\cite{MR3614974}, \cite{MR4486247}, 
\cite{pelaez2025incidenceequivalenceblochbeilinsonfiltration}, which
is an unconditional finite filtration satisfying several of the axioms of the still
conjectural Bloch-Beilinson-Murre filtration.

The paper is organized as follows:  in section \ref{sec.preel} we fix the notation
and recall some results that will be necessary in the rest of the paper.
In section \ref{s.Vmotreg} we state and establish our first main result
\eqref{thm1}, \eqref{thm2} concerning regular homomorphisms
and discuss several particular examples where our
result applies: Abel-Jacobi equivalence
\eqref{ss.int.jac} and incidence equivalence \eqref{ss.inc.eq}.  
In section \ref{s.abeq} we state and prove
our second main result \eqref{thm.1.ab}, \eqref{thm2.ab} concerning
Abelian equivalence.

\section{Preliminaries}  \label{sec.preel}

In this section we set the notation for the
rest of the paper and collect together some results from the literature
which will be used to establish our results.  The results of this
section are not original.

\subsection{Definitions and Notation}	\label{subsec.defandnots}		

Let $k$ be an arbitrary base field, and $Sch_k$ be the category
of $k$-schemes of finite type.  
Given $X\in Sch_k$, we will write $X(k)$  for
the set of $k$-points of $X$, and 
$\pi _X :X\rightarrow \spec{k}$ for the structure map.
If $x\in X(k)$, we will abuse notation and also write 
$x:\spec{k}\rightarrow X$ for the corresponding map in $Sch_k$.
In case $X$ is reduced and irreducible, let $k(X)$ denote
the function field of $X$.

We will write $Sm_k$ for the full
subcategory of $Sch_k$ which consists of smooth $k$-schemes
considered as a site with the Nisnevich topology.  Let $SmProj_k$ (resp. $Abv_k$)
be the full subcategory of $Sm_k$ given by smooth projective
varieties (resp. abelian varieties).
If $X\in SmProj_k$ of dimension $d$ and
 $x\in X(k)$, we will write $[x] \in CH^d(X)$ for the
zero-cycle associated to $x$ as a closed subscheme of $X$,
and $z_0^X: X(k)\rightarrow CH^d(X)$ for the map $x\in X(k)\mapsto
[x]\in CH^d(X)$.

We will use freely the language of triangulated categories.  Our main
references will be \cite{MR1812507}, \cite{MR751966}.  
We will write $[1]$ (resp. $[-1]$) to denote the suspension (resp. 
desuspension) funtor in a triangulated category; and for $n>0$, the
composition of $[1]$ (resp. $[-1]$) iterated $n$-times will be $[n]$
(resp. $[-n]$).  If $n=0$, then $[0]$ will be the identity functor.

Given a map $f:M\rightarrow N$ in an abelian category $\mathcal C$, we
will write $\mathrm{ker}(m)$ (resp. $\mathrm{coker}(m)$) for the kernel 
(resp. cokernel) of $f$ in $\mathcal C$.

\subsection{Voevodsky's triangulated category of motives}  \label{ss.DMdef}

We will write $Cor_k$ for the Suslin-Voevodsky category of finite
correspondence over $k$ \cite{MR1744945}, \cite{MR3590347},
and $Shv^{tr}$ for the category of Nisnevich sheaves with
transfers which is an abelian category \cite{MR2242284}*{13.1}.
Given $X\in Sm_k$, let $\mathbb Z _{tr}(X)$ be the Nisnevich
sheaf with transfers represented by $X$ \cite{MR2242284}*{2.8 and 6.2}.

Let $K(Shv ^{tr})$ be the category of unbounded chain complexes
on $Shv^{tr}$ equipped with the injective model structure
\cite{MR1780498}*{Prop. 3.13}, and let $D(Shv^{tr})$ be its
homotopy category.  We will write $K^{\mathbb A ^1}(Shv ^{tr})$ for
the left Bousfield localization \cite{MR1944041}*{3.3} of $K(Shv^{tr})$
with respect to the set of maps 
$\{ \mathbb Z _{tr}(X\times _k \mathbb A ^1)[n] \rightarrow
\mathbb Z _{tr}(X)[n] : X\in Sm_k; n\in \mathbb Z \}$ induced by the
projections $p: X\times _k \mathbb A ^1 \rightarrow X$.
Voevodsky's triangulated category of effective motives $\DMeff{k}$
is the homotopy category of $K^{\mathbb A ^1}(Shv ^{tr})$
\cite{MR1764202}.

We will write $T\in K^{\mathbb A ^1}(Shv ^{tr})$ for the
chain complex $\mathbb Z _{tr}(\mathbb G _m)[1]$ 
\cite{MR2242284}*{2.12}, where $\mathbb G _m$ is the $k$-scheme
$\mathbb A ^1 \backslash \{ 0 \}$ pointed by $1$.  Let
$Spt _T (Shv ^{tr})$ be the category of symmetric $T$-spectra on
$ K^{\mathbb A ^1}(Shv ^{tr})$ equipped with the model
structure considered in \cite{MR1860878}*{8.7 and 8.11},
\cite{MR2438151}*{Def. 4.3.29}.  Voevodsky's triangulated
category of motives $\DM{k}$ is the homotopy category of
$Spt _T (Shv ^{tr})$ \cite{MR1764202}.

Given $X\in Sm_k$, let $M(X)$ be the image of 
$\mathbb Z _{tr}(X) \in D(Shv ^{tr})$ under the $\mathbb A ^1$-localization
map $D(Shv ^{tr})\rightarrow \DMeff{k}$.  We will write
$\Sigma ^\infty : \DMeff{k}\rightarrow \DM{k}$ for the suspension functor
\cite{MR1860878}*{7.3}, we will abuse notation and simply write $E$ for
$\Sigma ^\infty E$, $E\in \DMeff{k}$.  

We notice that $\DMeff{k}$ and $\DM{k}$ are tensor
triangulated categories \cite{MR2438151}*{Thm. 4.3.76 and Prop. 4.3.77}
with unit $\sphere = M(\spec{k})$.  We will write $E(1)$ for
$E\otimes M(\mathbb G _m)[-1]$, $E\in \DM{k}$ and inductively
$E(n)=(E(n-1))(1)$, $n\geq 0$.  We observe that the functor
$\DM{k}\rightarrow \DM{k}$, $E\mapsto E(1)$ is an equivalence of
categories \cite{MR1860878}*{8.10}, \cite{MR2438151}*{Thm. 4.3.38};
we will write $E\mapsto E (-1)$ for its inverse, and inductively
$E(-n)=(E(-n+1))(-1)$, $n>0$.  By convention $E(0)=E$ for
$E\in \DM{k}$.

\subsection{}
We observe that $\DM{k}$ is a compactly generated triangulated
category \cite{MR1308405}*{Def. 1.7} with the following set of
compact generators \cite{MR2438151}*{Thm. 4.5.67}:

\begin{align}  \label{eq.DMgens}
\mathcal G _{\DM{k}} = \{ M (X)(r) : X\in Sm_k; r\in \mathbb Z \} .
\end{align}

Given $m\in \mathbb Z$, we consider:

\begin{align}  \label{eq.DMeffgenstw}
\mathcal G ^{\mathrm{eff}}(m) = \{ M(X)(r): X\in Sm_k; r\geq m \}
\subseteq \mathcal G _{\DM{k}}.
\end{align}
Let $\DMeff{k} (m)$ be the smallest full triangulated
subcategory of $\DM{k}$ containing $\mathcal G ^{\mathrm{eff}}(m)$
\eqref{eq.DMeffgenstw} and closed under arbitrary (infinite)
coproducts.
We observe that $\DMeff{k} (m)$ is also a compactly
generated triangulated category with set of generators
$\mathcal G ^{\mathrm{eff}}(m)$ \cite{MR1308405}*{Thm. 2.1(2.1.1)}.

The slice filtration \cite{MR1977582}, \cite{MR2600283}*{p. 18},
\cite{MR2249535} is the following tower of triangulated subcategories
of $\DM{k}$:

\begin{align}  \label{eq.slice.filtration}
\cdots \subseteq \DMeff{k} (m+1) \subseteq
\DMeff{k} (m) \subseteq \DMeff{k} (m-1)
\subseteq \cdots
\end{align}
where the
inclusion
$i_m:\DMeff{k} (m)\rightarrow \DM{k}$ admits a right adjoint
$r_m : \DM{k}\rightarrow \DMeff{k} (m)$ which is also a triangulated
functor \cite{MR1308405}*{Thm. 4.1}.  The
$(m-1)$-effective cover of the slice filtration is defined to be
$f_m = i_m \circ r_m : \DM{k}\rightarrow \DM{k}$ 
\cite{MR1977582}, \cite{MR2600283}*{p. 18},
\cite{MR2249535}.  Furthermore, there exist canonical triangulated functors
$s_m:\DM{k}\rightarrow \DM{k}$, $m\in \mathbb Z$, equipped with
natural transformations $f_m \rightarrow s_m$ and $s_m\rightarrow [1]\circ f_{m+1}$
which fit in a natural distinguished triangle in $\DM{k}$:
\begin{align}  \label{diag.slice.tr}
f_{m+1}E\rightarrow f_m E \rightarrow s_mE, \; \; E\in \DM{k}.
\end{align}

We observe that the
suspension functor $\Sigma ^\infty : \DMeff{k} \rightarrow
\DM{k}$ induces an equivalence of categories between $\DMeff{k}$
and $\DMeff{k} (0)$ \cite{MR2804268}*{Cor. 4.10} (in case $k$
is non-perfect of characteristic $p$, it is necessary to consider 
$\mathbb Z [\frac{1}{p}]$-coefficients \cite{MR3590347}*{Cor. 4.13, Thm.
4.12 and Thm. 5.1}).

\subsection{}  \label{ss.nt.htstr}
We will consider Voevodsky's homotopy $t$-structure
$((\DMeff{k})_{\geq 0}, (\DMeff{k})_{\leq 0})$ in $\DMeff{k}$
\cite{MR1764202}*{p. 11}.  We will use the homological notation
for $t$-structures \cite{MR2438151}*{\S 2.1.3}, \cite{MR2735752}*{\S 1.3},
and write $\tau _{\geq m}$, $\tau _{\leq m}$ for the truncation
functors and $\mathbf{h}_m = [-m](\tau _{\leq m} \circ \tau _{\geq m})$.
Let $\HINST{k}$ be the abelian category of homotopy invariant
Nisnevich sheaves with transfers on $Sm_k$, which is the heart
of the homotopy $t$-structure in $\DMeff{k}$.

\subsubsection{}  \label{ss.nt.abvar}
Let $A\in SmProj_k$ be an abelian variety.  We will write
$\mathcal A \in \HINST{k}$ for the functors of points of $A$ considered
as a homotopy invariant Nisnevich sheaf with transfers
\cite{MR1957261}*{Lem. 3.2, Rmk. 3.3}.  Given a map
$f:Y\rightarrow A$ in $Sm_k$,  we will  write
$\mathcal A
_f :M(Y)\rightarrow \mathcal A$ in $\DMeff{k}$ for the image of $f$
under the isomorphism
$\Gamma (Y, \mathcal A)\cong  \Hom_{\DMeff{k}}(M(Y), \mathcal A)$
\cite{MR1764202}*{Prop. 3.1.9 and 3.2.3}.

\begin{prop}  \label{prop.kpts.sep}
With the notation and conditions of \eqref{ss.nt.abvar}.  Assume
that the base field $k$ is algebraically closed, and let
$\mathcal F \in \HINST{k}$ \eqref{ss.nt.htstr}.  Consider a map
$f:\mathcal F \rightarrow \mathcal A$ in $\HINST{k}$.  Then
$f=0$ if and only if the induced map
$f_k =0: \Gamma (\spec{k}, \mathcal F)
\rightarrow \Gamma (\spec{k}, \mathcal A)$.
\end{prop}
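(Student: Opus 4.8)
The plan is to deduce the statement from naturality of $f$ together with two classical facts: that $\Gamma(\spec{k},-)$ detects zero morphisms into a separated group scheme once $k$ is algebraically closed (density of rational points), and that $\mathcal A$ is represented by the scheme $A$ as recorded in \eqref{ss.nt.abvar}. The ``only if'' direction is immediate, since $f_k$ is the image of $f$ under the functor $\Gamma(\spec{k},-)$; so assume $f_k=0$ and let us prove $f=0$.

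First I would reduce to a statement about sections: a morphism in $\HINST{k}$ is zero if and only if it induces the zero homomorphism on sections over every object of $Sm_k$, so it suffices to show that $f_U\colon\Gamma(U,\mathcal F)\to\Gamma(U,\mathcal A)$ vanishes for each $U\in Sm_k$. Fix such a $U$ and a section $\alpha\in\Gamma(U,\mathcal F)$. By \eqref{ss.nt.abvar}, $\Gamma(U,\mathcal A)=A(U)$ is the set of $k$-morphisms $U\to A$ and $\Gamma(\spec{k},\mathcal A)=A(k)$; write $g\in A(U)$ for the morphism $f_U(\alpha)$. The goal then becomes to show that $g$ equals the zero section $0_U\in A(U)$, namely the composite of $\pi_U\colon U\to\spec{k}$ with the identity element $0\in A(k)$.

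The key step is to test $g$ on $k$-points. Given $u\in U(k)$, viewed as a morphism $u\colon\spec{k}\to U$ in $Sm_k$, the naturality square for $f$ along $u$ gives $g\circ u=u^{\ast}f_U(\alpha)=f_k(u^{\ast}\alpha)$ in $A(k)$, which is $0$ since $f_k=0$; hence $g(u)=0$ for every $u\in U(k)$. Now $U$ is reduced (being smooth) and of finite type over the algebraically closed field $k$, so $U(k)$ is Zariski dense in $U$, while $A$ is separated. Therefore the equalizer of $g$ and $0_U$ — a closed subscheme of $U$, being the preimage of the closed diagonal $A\hookrightarrow A\times_k A$ under $(g,0_U)$, whose underlying set contains the dense set $U(k)$ — is all of $U$; hence $g=0_U$ and $f_U(\alpha)=0$. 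As $U$ and $\alpha$ were arbitrary, $f=0$.

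There is no serious obstacle here: the argument is formal once the objects are identified correctly, and the only point that uses the hypotheses in an essential way is the last one — the passage from ``$g$ vanishes on $U(k)$'' to ``$g=0_U$'', where algebraic closedness of $k$ (to make $U(k)$ dense in $U$) and separatedness of $A$ (to make the locus of coincidence closed) are exactly what is needed. In particular no property of $\mathcal F$, and nothing from the theory of homotopy invariant sheaves with transfers beyond the representability of $\mathcal A$ in \eqref{ss.nt.abvar}, enters the proof.
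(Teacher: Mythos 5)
Your proof is correct and follows essentially the same route as the paper: reduce to sections over a smooth scheme, use naturality of $f$ along $k$-points together with $f_k=0$, and conclude that a morphism to $A$ vanishing on all $k$-points is zero because $k$ is algebraically closed. The only differences are cosmetic — the paper restricts to affine $X\in Sm_k$ and cites Hartshorne for the last step, whereas you spell it out via density of $U(k)$ and separatedness of $A$.
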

\begin{proof}
On the one hand, if
$f=0$, it's immediate that the induced map $f_k =0$.
On the other hand, if 
$f_k =0: \Gamma (\spec{k}, \mathcal F)
\rightarrow \Gamma (\spec{k}, \mathcal A)$, then it is
enough to show
that for any affine $X\in Sm_k$: the induced map
$f_X=0: \Gamma (X, \mathcal F)
\rightarrow \Gamma (X, \mathcal A)$.

Let $\gamma \in \Gamma (X, \mathcal F)$, so
$f_X(\gamma) \in \Gamma (X, \mathcal A)$ is given by a map
in $Sm_k$: $f_X (\gamma):X\rightarrow A$.  Since 
$k$ is algebraically closed,
in order to show that $f_X (\gamma) =0$, it suffices to see that
for every $x\in X(k)$: $f_X(\gamma) (x)=0 \in A(k)$
\cite{MR463157}*{Ch. II. Prop. 2.6}, which follows
by the naturality of $f:\mathcal F \rightarrow \mathcal A$, i.e.
the commutativity of the following diagram:
\begin{align*}
\xymatrix{ \Gamma (X, \mathcal F) \ar[r]^-{f_X} \ar[d]_-{x^\ast}
& \Gamma (X, \mathcal A) \ar[d]^-{x^\ast}\\
\Gamma (\spec{k}, \mathcal F) \ar[r]_-{f_k=0}
& \Gamma (\spec{k}, \mathcal A)}
\end{align*}
\end{proof}

\subsection{The Albanese map}  \label{ss.alb.map}
With the notation and conditions of \eqref{ss.nt.abvar}.  
Assume that the base field $k$ is algebraically closed. 
Let $X\in SmProj_k$ of dimension $d$, and
$\psi = \mathrm{alb}_X : CH^d_{\mathrm{alg}}(X)
\rightarrow \mathrm{Alb}(X) (k)$  the Albanese map.
Let $x_0\in X(k)$ and $f:X \rightarrow A=\mathrm{Alb}(X)$
the canonical map into the Albanese variety such that $f(x_0)=0$.

Consider the following maps in $\DMeff{k}$,
$\mathcal A _f: M(X)\rightarrow \mathcal A$   \eqref{ss.nt.abvar}
 and $\pi _X:M(X)\rightarrow \sphere$ 
the morphism induced by the structure map 
$\pi _X:X \rightarrow \spec{k}$.
We will write
$< \pi _X , \mathcal A _f> : M(X)\rightarrow \sphere \oplus \mathcal A$ for
the map in $\DMeff{k}$ induced by $\pi _X$, $\mathcal A _f$ on each direct
factor of $\sphere \oplus \mathcal A$, see
\cite{MR1957261}*{(7) in p.7, and Rmk. 3.3}.  Let
\begin{align*}
\xymatrix{M_T (X)\ar[r]^-{z_T} & M(X) \ar[rr]^-{< \pi _X , \mathcal A _f>} 
&& \sphere \oplus \mathcal A }
\end{align*}
be a distinguished triangle in $\DMeff{k}$.

Now, by \cite{MR1764202}*{Prop. 2.1.4, Thm. 3.2.6} we obtain the following induced
map of abelian groups:
\begin{align*} 
\begin{split}
\xymatrix@R=0.1pc{
\Hom _{\DM{k}}(\sphere , M(X)) \ar[r]^-{\pi _{X \ast}}& 
\Hom_{\DM{k}}(\sphere , \sphere)\\
\wr  ||& \wr ||\\
CH^{d}(X) & \mathbb Z\\
\gamma \ar@{|->}[r]&  
\mathrm {deg}(\gamma)}
\end{split}
\end{align*}

Therefore, we conclude that the image of
the induced morphism:
\begin{align*}
z_{T \ast}:
\Hom _{\DMeff{k}}(\sphere , M_T (X))\rightarrow \Hom _{\DMeff{k}}(\sphere ,
M(X))\cong CH^d (X)
\end{align*}
is the albanese kernel $T(X)\subseteq CH^d_{\mathrm{alg}}(X)$.  

If we consider
rational coefficients, then we observe that the decomposition of the diagonal in
\cite{MR1061525}*{7.1 Thm. 3} holds for any $X\in SmProj _k$, so, by
\cite{MR1061525}*{1.5B, 3.1 Thm.1 and 4.1 Thm. 2}
there is a direct summand  $i_T:M'(X)\rightarrow 
M(X)_{\mathbb Q}$ of $M(X)_{\mathbb Q}$
which computes the albanese kernel, i.e. the image of the induced map
$i_{T \ast}:
\Hom _{\DMeff{k}}(\sphere , M' (X))\rightarrow \Hom _{\DMeff{k}}(\sphere ,
M(X)_{\mathbb Q})\cong CH^d (X)_{\mathbb Q}$ is
$T(X)_{\mathbb Q}
\subseteq CH^d_{\mathrm{alg}}(X)_{\mathbb Q}$.
	
Our main goal is to generalize this for higher dimensional cycles and
arbitrary regular homomorphisms 
\eqref{def.reg.hom}.	
Namely, to
construct an analogue of the map $z_T: M_T (X)\rightarrow M(X)$, which
computes the kernel of a given regular homomorphism
$\psi : CH^n_{\mathrm{alg}}(X)\rightarrow A(k)$.

\section{Voevodsky's motives and regular homomorphisms}  \label{s.Vmotreg}

\subsection{}	\label{not.corr.act}
In this section we assume further that the base field $k$ is
algebraically closed.
Let  $X$, $Y\in SmProj_k$ of dimension $d$, $d_Y$, respectively,
and $\Lambda \in CH^n (Y\times X)$, $0\leq n \leq d$.
Given $\gamma \in CH^{d_Y}(Y)$,
we will write $\Lambda (\gamma)\in CH^n(X)$ for
$p_{X\ast}(p_Y^\ast(\gamma)\cdot \Lambda)$ where
$p_X:Y\times X \rightarrow X$, $p_Y:Y\times X \rightarrow Y$
are the projections.

Given $\alpha \in CH^m(Z)$, $Z\in Sm_k$, we will abuse notation
and also write $\alpha: M(Z)\rightarrow \sphere (m)[2m]$ for the
map in $\DM{k}$ corresponding to $\alpha$ under the isomorphism
\cite{MR1883180}:
\begin{align*}
\Hom _{\DM{k}}(M(Z), \sphere (m)[2m])\cong CH^m(Z).
\end{align*}

\subsubsection{}   \label{sss.not.corr}
With the notation and conditions of \eqref{not.corr.act}.  Let
$y_0\in Y(k)$, and consider $\Lambda : M(Y)\otimes M(X)
\rightarrow \sphere (n)[2n]$ in $\DM{k}$.
Dualizing $M(X)$ \cite{MR1764202}*{Thm. 4.3.7},
\cite{MR2399083}*{Prop. 6.7.1 and \S 6.7.3}  we obtain the
following map in $\DM{k}$:
\begin{align*}
\xymatrix{\Lambda _X:M(Y) \ar[r]& M(X) (-d+n)[-2d+2n]}
\end{align*}

\subsubsection{}  \label{sss.not.simp}
Given a map $f:X\rightarrow Y$
in $Sm_k$, we will write $f:M(X)\rightarrow M(Y)$ for the map
induced by $f$ in $\DM{k}$.  In particular, we will write
 $y_0:\sphere \rightarrow M(Y)$ for the map in $\DM{k}$
induced by $y_0\in Y(k)$.

Let $\bar{y}_0$ be the following composition in $\DM{k}$ \eqref{sss.not.corr}:
\begin{align*}
\bar{y}_0=\Lambda _X \circ y_0 \circ \pi _Y: 
M(Y) \rightarrow M(X)(-d+n)[-2d+2n], 
\end{align*}
where
$\pi _Y : Y\rightarrow \spec{k}$ is the structure map.

\subsubsection{}  \label{ev.corr.y0}
Consider the following map in $\DM{k}$ \eqref{sss.not.corr}-\eqref{sss.not.simp}:
\begin{align*}
\Lambda _{y_0}=\Lambda _X - \bar{y}_0 :
M(Y)\rightarrow M(X)(-d+n)[-2d+2n].
\end{align*}
Then,
combining Lieberman's lemma \cite{MR1644323}*{Prop. 16.1.1} and
\cite{MR1764202}*{Prop. 2.1.4, Thm. 3.2.6},  we deduce that
$\Lambda _{y_0}$ 
 induces the following map of abelian groups:
\begin{align} \label{action.corr}
\begin{split}
\xymatrix@R=0.1pc{
\Hom _{\DM{k}}(\sphere , M(Y)) \ar[r]^-{\Lambda _{y_0 \ast}}& 
\Hom_{\DM{k}}(\sphere , M(X)(-d+n)[-2d+2n])\\
\wr  ||& \wr ||\\
CH^{d_Y}(Y) & CH^n(X)\\
\gamma \ar@{|->}[r]&  
\Lambda (\gamma) -\mathrm {deg}(\gamma)\Lambda ([y_0])}
\end{split}
\end{align}
Thus, we obtain the composition:
\begin{align}  \label{diag.algequiv}
\begin{split}
\xymatrix@R=0.1pc{Y(k) \ar[r]^-{z_0 ^Y}& CH^{d_Y}(Y) \ar[r]^-{\Lambda _{y_0 \ast}}
& CH^n(X) \\
y \ar@{|->}[r]&[y] \ar@{|->}[r]& \Lambda ([y] -  [y_0 ])}
\end{split}
\end{align}

\subsection{Regular homomorphisms}	\label{def.reg.hom}
Let $X\in SmProj_k$ of dimension $d$,
 $1\leq n \leq d$, $A \in SmProj_k$ an
abelian variety.  A group homomorphism
$\psi : CH^n_{\mathrm{alg}}(X) \rightarrow A(k)$ is
\emph{regular} \cite{MR116010}*{p. 477}, if for every $Y\in SmProj _k$,
$\Lambda \in CH^n(Y\times X)$, $y_0 \in Y(k)$;   there exists a map
$\psi _{\Lambda, y_0}: Y\rightarrow A$ in $Sm_k$, such that
for $y\in Y(k)$: $\psi _{\Lambda, y_0}(y)
=\psi(\Lambda ([y]-[y_0]))$ \eqref{not.corr.act}.

\subsubsection{}  \label{def.gp.reg.hom}
We will write $Reg^n(X,A)$ for the abelian group of regular
homomorphisms $\psi : CH^n_{\mathrm{alg}}(X) \rightarrow A(k)$.

\subsubsection{}  \label{ind.reg.A}
With the notation and conditions of \eqref{not.corr.act} and
\eqref{sss.not.corr}.
Let  $\psi : CH^n_{\mathrm{alg}}(X) \rightarrow A(k)$ 
be a regular homomorphism
\eqref{def.reg.hom}.  Then
the map  $\psi_{\Lambda, y_0}:
Y\rightarrow A$ in $Sm_k$ \eqref{def.reg.hom} induces
the following map in $\DMeff{k}$ \eqref{ss.nt.abvar}:
\begin{align*}
\mathcal A 
_{\psi_{\Lambda, y_0}}: M(Y) \rightarrow \mathcal A.
\end{align*}
Consider the homomorphism induced by $\mathcal A 
_{\psi_{\Lambda, y_0}}$:
\begin{align*}
(\mathcal A 
_{\psi_{\Lambda, y_0}})_{\ast}:\Hom_{\DMeff{k}}(\sphere , M(Y))\cong
CH^{d_y}(Y)\rightarrow \Hom_{\DMeff{k}}(\sphere , \mathcal A )\cong 
A(k)
\end{align*}

\begin{prop}  \label{prop.indmap.ev}
Let $\beta \in \Hom_{\DMeff{k}}(\sphere , M(Y))\cong
CH^{d_y}(Y)$.  Then: 
\begin{align*}
(\mathcal A 
_{\psi_{\Lambda, y_0}})_{\ast}(\beta)=
  \psi (\Lambda (\beta) -\mathrm{deg}(\beta)
\Lambda ([y_0])) \in \Hom_{\DMeff{k}}(\sphere , \mathcal A )\cong 
A(k).
\end{align*}  
\end{prop}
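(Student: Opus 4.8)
The plan is to reduce the statement to the definition of a regular homomorphism, interpreted via $k$-points, using Proposition~\ref{prop.kpts.sep} to pass from an identity of sheaf maps to an identity on $k$-points. First I would observe that both sides of the claimed equality are functions of $\beta \in CH^{d_y}(Y)$, so it suffices to check that the map $(\mathcal{A}_{\psi_{\Lambda,y_0}})_{\ast}$ and the map $\beta \mapsto \psi(\Lambda(\beta) - \mathrm{deg}(\beta)\Lambda([y_0]))$ agree as group homomorphisms $CH^{d_y}(Y) \to A(k)$. Since $k$ is algebraically closed, $Y$ is generated (as a sheaf-theoretic object, via the representing sheaf $\mathbb{Z}_{tr}(Y)$ and its sections) by the classes $[y]$ with $y \in Y(k)$ together with $\mathbb{Z}$ (accounting for the degree), or more precisely, it is enough to check the identity on the generating cycles $[y]$, $y\in Y(k)$, which correspond under $\Hom_{\DMeff{k}}(\sphere, M(Y)) \cong CH^{d_y}(Y)$ to the compositions $y\circ \pi_Y^{?}$ — i.e. to the maps $\sphere \xrightarrow{y} M(Y)$ in the notation of \eqref{sss.not.simp}.

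For such a generator $[y]$, I would compute both sides explicitly. On the one hand, $(\mathcal{A}_{\psi_{\Lambda,y_0}})_{\ast}([y])$ is, by the very construction of $\mathcal{A}_f$ in \eqref{ss.nt.abvar} (the isomorphism $\Gamma(Y,\mathcal{A}) \cong \Hom_{\DMeff{k}}(M(Y), \mathcal{A})$ is functorial in $Y$), nothing but the value $\psi_{\Lambda,y_0}(y) \in A(k)$ of the morphism $\psi_{\Lambda,y_0}: Y \to A$ at the point $y$: precomposing $\mathcal{A}_{\psi_{\Lambda,y_0}}$ with $y: \sphere \to M(Y)$ picks out $y^{\ast}(\psi_{\Lambda,y_0}) = \psi_{\Lambda,y_0}(y)$. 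On the other hand, by the defining property of a regular homomorphism \eqref{def.reg.hom}, $\psi_{\Lambda,y_0}(y) = \psi(\Lambda([y]-[y_0]))$. Finally, since $\mathrm{deg}([y]) = 1$, the right-hand side of the proposition evaluated at $\beta = [y]$ reads $\psi(\Lambda([y]) - \Lambda([y_0])) = \psi(\Lambda([y]-[y_0]))$, using linearity of $\Lambda(-)$ and of $\psi$. So the two sides agree on all generators $[y]$.

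It remains to promote this agreement-on-generators to an identity of homomorphisms on all of $CH^{d_y}(Y)$. Here the cleanest route is via Proposition~\ref{prop.kpts.sep}: I would consider the difference map $\delta = \mathcal{A}_{\psi_{\Lambda,y_0}} - (\text{the }\DMeff{k}\text{-map realizing } \beta \mapsto \psi(\Lambda(\beta)-\mathrm{deg}(\beta)\Lambda([y_0]))): M(Y) \to \mathcal{A}$, observe that $\mathbf{h}_0$ applied to the source lands in $\HINST{k}$ with a map to $\mathcal{A}$, and invoke Proposition~\ref{prop.kpts.sep} — whose hypothesis is precisely that the induced map on $k$-points vanishes, which is what the generator computation establishes. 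Alternatively, and perhaps more elementarily, one notes that $\Hom_{\DMeff{k}}(\sphere, M(Y)) \cong CH^{d_y}(Y) = CH_0(Y)$ is generated as an abelian group by the classes $[y]$, $y \in Y(k)$ (as $k = \bar{k}$), and both maps in question are group homomorphisms, so agreement on generators suffices directly. I expect the main obstacle to be purely bookkeeping: one must carefully match the isomorphism $\Gamma(Y,\mathcal{A}) \cong \Hom_{\DMeff{k}}(M(Y),\mathcal{A})$ with its functoriality in $Y$ (so that precomposition with $y: \sphere \to M(Y)$ corresponds to pullback $y^{\ast}: \Gamma(Y,\mathcal{A}) \to \Gamma(\spec k, \mathcal{A}) = A(k)$), and then identify the second summand, $\beta \mapsto -\mathrm{deg}(\beta)\psi(\Lambda([y_0]))$, with the composite through $\pi_Y: M(Y) \to \sphere$; once these identifications are in place the proposition follows formally from \eqref{diag.algequiv} and the definition \eqref{def.reg.hom}.
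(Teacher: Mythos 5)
Your proposal is correct and, in its ``more elementary'' variant, is essentially the paper's own proof: since $k$ is algebraically closed one writes $\beta=\sum_i n_i [y_i]$ with $y_i\in Y(k)$, evaluates $(\mathcal A_{\psi_{\Lambda,y_0}})_{\ast}$ on each $y_i$ via the functor-of-points description of $\mathcal A$ and the regularity of $\psi$, and concludes by linearity of $\psi$, $\Lambda(-)$ and $\mathrm{deg}$. The detour through \eqref{prop.kpts.sep} is unnecessary (the statement concerns only $\Hom_{\DMeff{k}}(\sphere,-)$, i.e.\ it is already a statement about $k$-points, and the right-hand side is not given as a sheaf map), but since you also supply the direct generator argument, nothing essential is missing.
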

\begin{proof}
Since the base field $k$ is algebraically closed,
we observe that 
$\beta =\sum _i n_i y_i$, $n_i \in \mathbb Z$, $y_i \in Y(k)$, where the
sum is considered in $\Hom _{\DMeff{k}}(\sphere , M(Y))$ 
\eqref{sss.not.simp}.  Hence,
$(\mathcal A 
_{\psi_{\Lambda, y_0}})_{\ast}(\beta) = \sum _i n_i 
(\mathcal A 
_{\psi_{\Lambda, y_0}})_{\ast}(y_i)\in A(k)$.

Now, we recall that $\mathcal A \in \HINST{k}$ is the functor of points of 
$A$ \eqref{ss.nt.abvar},  so $(\mathcal A 
_{\psi_{\Lambda, y_0}})_{\ast}(y_i) = \psi_{\Lambda, y_0}(y_i)$ for
$y_i \in Y(k)$.  Hence, since $\psi$ is a regular homomorphism
\eqref{def.reg.hom}, we conclude that:
\begin{align*}
\sum _i n_i \psi (\Lambda ([y_i])- \Lambda
([y_0])) =
\sum _i n_i \psi_{\Lambda, y_0}(y_i) =
\sum _i n_i 
(\mathcal A 
_{\psi_{\Lambda, y_0}})_{\ast}(y_i) =
(\mathcal A 
_{\psi_{\Lambda, y_0}})_{\ast}(\beta).
\end{align*}
Finally, since $\psi$ is a group homomorphism, 
we deduce that
$\sum _i n_i \psi (\Lambda ([y_i])- \Lambda
([y_0]))= \psi (\Lambda (\sum _i n_i[y_i])- \sum _i n_i \Lambda
([y_0])) = \psi (\Lambda (\beta) -\mathrm{deg}(\beta)
\Lambda ([y_0]))$, which implies the result.
\end{proof}

Combining \eqref{ev.corr.y0} and \eqref{ind.reg.A}
we obtain the following diagram in $\DM{k}$:
\begin{align}  \label{ind.reg.B}
\begin{split}
\xymatrix{M(Y) \ar[r]^-{\Lambda _{y_0}}
\ar[d]^-{\mathcal A 
_{\psi_{\Lambda, y_0}}}& M(X)(-d+n)[-2d+2n]\\
\mathcal A &}
\end{split}
\end{align}

\subsubsection{}  \label{cons.proj.gen}
Let $SmProj_{\geq 1} =\{ Y\in SmProj_k : \krulldim \; Y \geq 1 \}$
and let $\mathcal T \in \DMeff{k}$ denote the following direct sum:
\begin{align*}  
\mathcal T = 
\bigoplus \limits_{\substack{\Lambda \in CH^n(Y\times X),
  y_0\in Y(k)\\
	Y\in SmProj_{\geq 1}}} M(Y),
\end{align*}
Consider the maps in $\DM{k}$
induced by \eqref{ind.reg.B} on each direct summand of $\mathcal T$:
\begin{align}  \label{diag.starting}
\begin{split}
\xymatrix{\mathcal T \ar[r]^-{(\Lambda _{y_0})}
\ar[d]_-{(\mathcal A _{\psi _{\Lambda , y_0}})}& M(X)(-d+n)[-2d+2n]\\
\mathcal A &}
\end{split}
\end{align}

\subsubsection{}  \label{not.varphi}
To simplify the notation, we will write $\varphi _m: \DM{k}\rightarrow
\DMeff{k}$, $m \in \mathbb Z$
for the triangulated functor: $E\mapsto f_m(E)(-m)[-2m]$
\eqref{eq.slice.filtration}-\eqref{diag.slice.tr}.
By \cite{MR2600283}*{Lem. 5.9}, \cite{MR2249535}*{Prop. 1.1},
we deduce that for $E\in \DMeff{k}$, $m\geq 0$:
$\varphi _m (E)\cong \inthomeff (\sphere (m)[2m], E)$, where
$\inthomeff$ is the internal Hom-functor in $\DMeff{k}$.

Recall that  
$f_{0}(E(-m)[-2m])\cong f_m(E)(-m)[-2m]$ \cite{MR3614974}*{3.3.3(2)},
and $\mathcal T \in \DMeff{k}$ \eqref{cons.proj.gen}; so,
by the universal property \cite{MR3614974}*{3.3.1},
of the counit of the adjunction
$(i_0, r_0)$ \eqref{eq.slice.filtration},
 there is a unique map $p$ such that
the following diagram in $\DM{k}$ commutes:
\begin{align}  \label{diag.modeffcov}
\begin{split}
\xymatrix{ & \varphi _{d-n}(M(X))= f_0(M(X)(-d+n)[-2d+2n]) 
\ar[d]^-{\epsilon _0}\\
\mathcal T \ar[r]_-{(\Lambda _{y_0})} \ar[ur]^-p& M(X)(-d+n)[-2d+2n]}
\end{split}
\end{align}
and  an isomorphism:
\begin{align}
\Hom _{\DMeff{k}}(\sphere , \varphi _{d-n}(M(X)))
\overset{\cong}{\underset{\epsilon _{0 \ast}}{\rightarrow}}
\Hom _{\DM{k}}(\sphere , M(X)(-d+n)[-2d+2n]).
\end{align}
Thus, by \cite{MR1764202}*{Prop. 2.1.4, Thm. 3.2.6}:
\begin{align} \label{rmk.comp.indmap.a}
\Hom _{\DMeff{k}}(\sphere , \varphi _{d-n}(M(X))) \cong CH^{n}(X).
\end{align}

\begin{rmk}  \label{rmk.comp.indmap.b}
  Under the isomorphism 
\eqref{rmk.comp.indmap.a}, the image of 
$p_\ast : \Hom_{\DMeff{k}}(\sphere ,\mathcal T) \rightarrow \Hom_{\DMeff{k}}(\sphere , 
\varphi _{d-n}(M(X)))$ \eqref{diag.modeffcov}
is given by $CH^n_{\mathrm{alg}}(X)$.
In effect, this follows from 
\eqref{diag.modeffcov}-\eqref{rmk.comp.indmap.a},
\eqref{action.corr}-\eqref{diag.algequiv} and the definition of 
algebraic equivalence \cite{MR116010}*{\S 2.2}.
\end{rmk}

\subsubsection{}  \label{main.dist.triang}
Consider the following distinguished triangle in $\DMeff{k}$:
\begin{align*}
\xymatrix{\mathcal R \ar[r]^-r & \mathcal T \ar[r]^-p &
 \varphi _{d-n}(M(X)) .}
\end{align*}

We notice that
$\varphi _{d-n}(M(X)), \mathcal T \in (\DMeff{k})_{\geq 0}$ 
\cite{pelaez2025incidenceequivalenceblochbeilinsonfiltration}*{5.1.4}; 
so, since $\{ \mathbf{h}_i :\DMeff{k}\rightarrow
\HINST{k}, i\in \mathbb Z\}$ is a cohomological functor
 \cite{MR751966}*{Thm. 1.3.6}, we conclude that
 $\mathcal R \in (\DMeff{k})_{\geq -1}$ and that $\mathbf{h}_{-1}(\mathcal R)$
is the cokernel in $\HINST{k}$ of the map
$p_\ast : \mathbf{h}_0 (\mathcal T ) \rightarrow \mathbf{h}_0 (\varphi _{d-n}(M(X)))$.

\subsubsection{} \label{sss.h-1R.birat}
Thus, since $\mathbf{h}_0 (\varphi _{d-n}(M(X)))\in \HINST{k}$ is a birational sheaf
\cite{pelaez2025incidenceequivalenceblochbeilinsonfiltration}*{5.1.5},
we conclude that $\mathbf{h}_{-1}(\mathcal R) \in \HINST{k}$ is a
birational sheaf as well \cite{MR3737321}*{Prop. 2.6.2}.

Moreover, by combining \eqref{rmk.comp.indmap.a} with \eqref{rmk.comp.indmap.b} 
we conclude that
$\Gamma (\spec{k}, \mathbf{h}_{-1}(\mathcal R))$ 
is the Neron-Severi group of
codimension $n$ algebraic cycles of $X$,
$NS^n(X)=CH^n(X)/CH^n_{\mathrm{alg}}(X)$, 
and that there exists a map in $\HINST{k}$:
$\mathbf{h}_{-1}(\mathcal R )\rightarrow \mathrm{NS}^n_{/X}$  which becomes an isomorphism after taking
global sections $\Gamma (\spec{k}, -)$,
where $\mathrm{NS}^n_{/X}$
is the Neron-Severi sheaf of Ayoub and Barbieri-Viale
\cite{MR2494373}*{Def. 3.1.2}.

\subsubsection{}  \label{diag.const.ext.1}
Now, we will apply
the homotopy $t$-structure
 \eqref{ss.nt.htstr} to construct a factorization of 
the map $(\mathcal A _{\psi _{\Lambda, y_0}})\circ r:
\mathcal R \rightarrow \mathcal A$.
So, we
consider the following diagram in $\DMeff{k}$ \eqref{diag.starting},
\eqref{main.dist.triang}:

\begin{align}  \label{diag.principal}
\begin{split}
\xymatrix{\tau _{\geq 1}\mathcal R \ar[r]^-{t_1}&\tau _{\geq 0}\mathcal R 
\ar[r]^-{t_0} \ar[d]_-{\sigma _0}
& \tau _{\geq -1}\mathcal R \cong \mathcal R 
\ar[d]_-{\sigma _{-1}}  \ar[r]^-r& \mathcal T
\ar[dd]^-{(\mathcal A _{\psi _{\Lambda , y_0}})} \\
& \mathbf{h}_{0} (\mathcal R ) \ar@{-->}[drr]_-{a_0}& \mathbf{h}_{-1} (
\mathcal R )[-1] 
\ar@{-->}[dr]^-{e_\psi}&  \\
&&& \mathcal A}
\end{split}
\end{align}
where $\tau _{i+1}\mathcal R \stackrel{t_{i+1}}{\longrightarrow}
\tau _i \mathcal R \stackrel{\sigma _i}{\longrightarrow}
\mathbf h _i (\mathcal R) [i]$ are distinguished triangles in $\DMeff{k}$
for $i=0, -1$.

We recall that $\mathcal A \in \HINST{k}$ \eqref{ss.nt.abvar}, so:
\begin{align*}
\Hom _{\DMeff{k}}(\tau _{\geq 1}\mathcal R, \mathcal A)=0=
\Hom _{\DMeff{k}}((\tau _{\geq 1}\mathcal R )[1], \mathcal A).
\end{align*}
\subsubsection{}  \label{1st.factor}
Thus, there is a unique map $a_0:\mathbf{h}_0 \rightarrow \mathcal A$
in $\DMeff{k}$, such that $a_0\circ \sigma _0 = (\mathcal A _{\psi _
{\Lambda , y_0}})\circ r \circ t_0$ in \eqref{diag.principal}.

\begin{prop}  \label{prop.main.tech}
With the notation and conditions of \eqref{ind.reg.A}, \eqref{diag.principal}
and \eqref{1st.factor}.
The map 
$a_0: \mathbf{h}_0 ( \mathcal R ) \rightarrow \mathcal A$ is zero.
\end{prop}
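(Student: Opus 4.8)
The plan is to use Proposition \ref{prop.kpts.sep}: since $\mathcal{A}$ is the functor of points of an abelian variety and the base field $k$ is algebraically closed, it suffices to show that the induced map on global sections $a_{0,k} = 0 : \Gamma(\spec{k}, \mathbf{h}_0(\mathcal{R})) \rightarrow \Gamma(\spec{k}, \mathcal{A}) \cong A(k)$ vanishes. So first I would reduce to this statement about $k$-points, and then try to identify concretely what $\Gamma(\spec{k}, \mathbf{h}_0(\mathcal{R}))$ is and how $a_{0,k}$ acts on it.

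Next I would unwind the definition of $a_0$ via \eqref{diag.principal} and \eqref{1st.factor}: by construction $a_0 \circ \sigma_0 = (\mathcal{A}_{\psi_{\Lambda, y_0}}) \circ r \circ t_0$. Applying $\Gamma(\spec{k}, -)$ and using that $\mathbf{h}_i$ is a cohomological functor, the map $\sigma_{0,k} : \Gamma(\spec{k}, \tau_{\geq 0}\mathcal{R}) \rightarrow \Gamma(\spec{k}, \mathbf{h}_0(\mathcal{R}))$ is (essentially, up to the contribution of $\tau_{\geq 1}\mathcal{R}$) the relevant surjection onto $H_0$; more precisely from the long exact sequence of the truncation triangle and the fact that $\mathcal{R} \in (\DMeff{k})_{\geq -1}$, the element $\Gamma(\spec{k}, \mathbf{h}_0(\mathcal{R}))$ is computed as a subquotient of $\Hom_{\DMeff{k}}(\sphere, \mathcal{R})$, and the composite $\mathcal{R} \xrightarrow{r} \mathcal{T} \xrightarrow{(\mathcal{A}_{\psi_{\Lambda, y_0}})} \mathcal{A}$ on $\sphere$-points is, by \eqref{main.dist.triang}, precisely the kernel of $p_\ast$ mapped into $A(k)$. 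So I would trace an arbitrary class in $\Gamma(\spec{k}, \mathbf{h}_0(\mathcal{R}))$ back to a class $\rho \in \Hom_{\DMeff{k}}(\sphere, \mathcal{R})$ with $r_\ast(\rho) \in \ker(p_\ast : \Hom_{\DMeff{k}}(\sphere, \mathcal{T}) \rightarrow \Hom_{\DMeff{k}}(\sphere, \varphi_{d-n}(M(X))))$.

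The crux is then this: by Remark \ref{rmk.comp.indmap.b} together with \eqref{action.corr}--\eqref{diag.algequiv}, the map $p_\ast$ on $\sphere$-points sends a class $\beta = \sum_i n_i y_i$ in the summand of $\mathcal{T}$ indexed by $(Y, \Lambda, y_0)$ to $\Lambda(\beta) - \deg(\beta)\Lambda([y_0]) \in CH^n(X)$, and this lands in $CH^n_{\mathrm{alg}}(X)$. Meanwhile, by Proposition \ref{prop.indmap.ev}, the map $(\mathcal{A}_{\psi_{\Lambda, y_0}})_\ast$ sends the same $\beta$ to $\psi(\Lambda(\beta) - \deg(\beta)\Lambda([y_0])) \in A(k)$. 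Hence if $r_\ast(\rho)$ lies in $\ker(p_\ast)$, then in each coordinate the corresponding cycle $\Lambda(\beta) - \deg(\beta)\Lambda([y_0])$ is rationally equivalent to zero in $CH^n(X)$, so its image under $\psi$ is $0$ in $A(k)$ — here one uses that $\psi$ is a homomorphism defined on $CH^n_{\mathrm{alg}}(X)$, i.e. on cycles already taken modulo rational equivalence. Summing over coordinates, $(\mathcal{A}_{\psi_{\Lambda, y_0}})_\ast(r_\ast(\rho)) = 0$ in $A(k)$. Since $a_{0,k}$ is induced from exactly this composite via $\sigma_{0,k}$, and $\sigma_{0,k}$ surjects onto $\Gamma(\spec{k}, \mathbf{h}_0(\mathcal{R}))$ modulo the image of $\tau_{\geq 1}\mathcal{R}$ (which maps to $0$ in $\mathcal{A}$ anyway, since $\Hom_{\DMeff{k}}(\tau_{\geq 1}\mathcal{R}, \mathcal{A}) = 0$), we conclude $a_{0,k} = 0$, and therefore $a_0 = 0$ by Proposition \ref{prop.kpts.sep}.

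The main obstacle I anticipate is the bookkeeping in the second paragraph: making precise the passage from a global section of $\mathbf{h}_0(\mathcal{R})$ back to an actual morphism $\sphere \rightarrow \mathcal{R}$ whose image under $r_\ast$ is a well-defined element of $\ker(p_\ast)$, rather than only a class modulo the indeterminacy coming from $\tau_{\geq 1}\mathcal{R}$ and from the boundary map $\Hom_{\DMeff{k}}(\sphere, \varphi_{d-n}(M(X))) \rightarrow \Hom_{\DMeff{k}}(\sphere, \mathcal{R}[1])$. The vanishing $\Hom_{\DMeff{k}}(\tau_{\geq 1}\mathcal{R}, \mathcal{A}) = 0 = \Hom_{\DMeff{k}}((\tau_{\geq 1}\mathcal{R})[1], \mathcal{A})$ already recorded in \eqref{diag.principal} is what resolves the $\tau_{\geq 1}\mathcal{R}$ part; for the rest one argues directly on $k$-points, where $\mathbf{h}_0$ being a cohomological functor and $k$ being a field (so $\Gamma(\spec{k}, -)$ is exact on short exact sequences of sheaves, as $\spec{k}$ is a point in the Nisnevich topology) gives the needed surjectivity of $\sigma_{0,k}$ onto $\Gamma(\spec{k}, \mathbf{h}_0(\mathcal{R}))$.
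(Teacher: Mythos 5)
Your overall skeleton is exactly the paper's: reduce to $k$-points via Proposition \eqref{prop.kpts.sep}, use that $\sigma_{0\ast}$ is an isomorphism (in fact the paper notes this, not merely surjectivity) on $\Hom_{\DMeff{k}}(\sphere,-)$ because $\spec{k}$ is a point for the Nisnevich topology, reduce to showing the composite $((\mathcal A_{\psi_{\Lambda,y_0}})\circ r)_\ast$ vanishes on $\Hom_{\DMeff{k}}(\sphere,\mathcal R)$, and then compute using $p\circ r=0$, compactness of $\sphere$, \eqref{action.corr} and Proposition \eqref{prop.indmap.ev}. But there is one step in your third paragraph that is false as stated: from $r_\ast(\rho)\in\ker(p_\ast)$ you conclude that \emph{in each coordinate} the cycle $\Lambda(\beta)-\deg(\beta)\Lambda([y_0])$ is rationally equivalent to zero. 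Since $\sphere$ is compact, $\Hom_{\DMeff{k}}(\sphere,\mathcal T)\cong\bigoplus_i CH^{d_i}(Y_i)$ and $p_\ast$ is the \emph{sum} of the coordinate maps $\beta_i\mapsto \Lambda_i(\beta_i)-\deg(\beta_i)\Lambda_i([y_{0i}])$, all landing in the single group $CH^n(X)$. Hence $p_\ast(r_\ast\rho)=0$ only says that the total sum $\sum_i\bigl(\Lambda_i(\beta_i)-\deg(\beta_i)\Lambda_i([y_{0i}])\bigr)$ vanishes in $CH^n(X)$; the individual summands need not vanish, so your coordinatewise appeal to ``$\psi$ of a zero class is zero'' does not go through.

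The repair is immediate and is precisely what the paper does: each summand $\xi_i=\Lambda_i(\beta_i)-\deg(\beta_i)\Lambda_i([y_{0i}])$ lies in $CH^n_{\mathrm{alg}}(X)$ (by \eqref{diag.algequiv}, \eqref{rmk.comp.indmap.b}), and every coordinate of the map to $\mathcal A$ is computed, via Proposition \eqref{prop.indmap.ev}, by the \emph{same} regular homomorphism $\psi$ applied to $\xi_i$; so additivity of $\psi$ gives $\sum_i\psi(\xi_i)=\psi\bigl(\sum_i\xi_i\bigr)=\psi(0)=0$ in $A(k)$, which is the vanishing you need before invoking \eqref{prop.kpts.sep}. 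With that correction your argument coincides with the paper's proof; the remaining bookkeeping you worry about (passing from a section of $\mathbf h_0(\mathcal R)$ to a map $\sphere\to\mathcal R$) is handled exactly as you suggest, since $a_0\circ\sigma_0=(\mathcal A_{\psi_{\Lambda,y_0}})\circ r\circ t_0$ and $\sigma_{0\ast}$ is bijective on $\sphere$-points, so it suffices to kill the composite on all of $\Hom_{\DMeff{k}}(\sphere,\mathcal R)$ and no subquotient analysis is required.
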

\begin{proof}
By \eqref{prop.kpts.sep}, it suffices to see that the induced map
$a_{0 \ast}:
\Hom _{\DMeff{k}}(\sphere , \mathbf{h}_0 (\mathcal R ) ) \rightarrow
\Hom _{\DMeff{k}}(\sphere , \mathcal A)\cong A(k)$ is zero.

We observe that 
$\sigma _{0\ast}:
\Hom _{\DMeff{k}}(\sphere , \tau _{\geq 0} \mathcal R ) 
\stackrel{\cong}{\rightarrow}
\Hom _{\DMeff{k}}(\sphere , \mathbf{h}_0 (\mathcal R ) )$, since $\spec{k}$
is a point for the Nisnevich topology; so, it is
enough to show that the induced map:
\begin{align*}
((\mathcal A _{\psi _
{\Lambda , y_0}})\circ r)_\ast:
\Hom _{\DMeff{k}}(\sphere , \mathcal R ) \rightarrow
\Hom _{\DMeff{k}}(\sphere , \mathcal A)
\end{align*}
is zero, since
$a_0\circ \sigma _0 = (\mathcal A _{\psi _
{\Lambda , y_0}})\circ r \circ t_0$ \eqref{1st.factor}.

Now, let
$\gamma :\sphere \rightarrow \mathcal R$.
On the one hand,
by \eqref{main.dist.triang}:
$0=p\circ r \circ \gamma :\sphere \rightarrow \varphi _{d-n}(M(X))$,
so, by the commutativity of \eqref{diag.modeffcov} we conclude that
 $0=(\Lambda _{y_0})
\circ r \circ \gamma :\sphere \rightarrow M(X)(-d+n)[-2d+2n]$.
On the other hand,
by the compactness of $\sphere \in \DMeff{k}$, we deduce that the
map $r\circ \gamma$
\eqref{diag.principal}, \eqref{cons.proj.gen}, factors through 
a finite direct sum:
$\oplus _{i=1}^{m} M(Y_i)$, where $(Y_i \in SmProj _k,
\Lambda _i \in CH^n(Y_i \times X), y_{0i}\in Y_i(k))$.  

Thus, we may
assume that
$r\circ \gamma =<\beta _i >: \sphere \rightarrow \oplus _{i=1}^m M(Y_i) $,
where $d_i=\dim Y_i$ and
$\beta _i \in \Hom _{\DMeff{k}}(\sphere, M(Y_i))\cong CH^{d_i}(Y_i)$.
Hence, by \eqref{action.corr}:
$\sum _{i=1}^{m} \Lambda _i (\beta _i) -\mathrm{deg} (\beta _i)
\Lambda _i ([y_{0i}]) =0\in CH^n_{\mathrm{alg}}(X)$,
since $0=(\Lambda _{y_0})
\circ r \circ \gamma = \sum _{i=1} ^m \Lambda _{y_{0i}}\circ r \circ \gamma$.

This implies that: 
\begin{align*}
\sum _{i=1}^{m} \psi (\Lambda _i (\beta _i) -\mathrm{deg} (\beta _i)
\Lambda _i ([y_{0i}])) &=\psi(\sum _{i=1}^{m} \Lambda _i (\beta _i) -\mathrm{deg} (\beta _i)
\Lambda _i ([y_{0i}]))\\
& =\psi (0) =0 \in A(k).
\end{align*}
Then, by \eqref{prop.indmap.ev}, we conclude that
$0=\sum _{i=1}^{m} \mathcal A
_{\psi_{\Lambda _i, y_{0i}}} \circ \beta _i  :\sphere \rightarrow 
\mathcal A$, which implies the result, since:
$((\mathcal A_{\psi _{\Lambda, y_0}})\circ r)_{\ast}(\gamma)=
(\mathcal A_{\psi _{\Lambda, y_0}})\circ r \circ \gamma  =
\sum _{i=1}^{m} \mathcal A
_{\psi_{\Lambda _i, y_{0i}}} \circ \beta _i :
\sphere \rightarrow \mathcal A$.
\end{proof}

\subsubsection{}  \label{ss.constr.fac}
With the notation and conditions of \eqref{ind.reg.A}, \eqref{cons.proj.gen},
\eqref{diag.starting}
and \eqref{diag.principal}.
We observe that $\Hom _{\DMeff{k}}((\tau _{\geq 0}\mathcal R )[1], \mathcal A)=0$,
since $\mathcal A \in \HINST{k}$.  Thus, combining \eqref{1st.factor} and
\eqref{prop.main.tech}, we conclude that
there is a unique  map
$e_{\psi}:  \mathbf{h}_{-1} (\mathcal R )[-1]  \rightarrow \mathcal A$,
such that the following diagram  in $\DM{k}$
commutes \eqref{diag.principal}:
\begin{align}  \label{diag.comm.square}
\begin{split}
\xymatrix{\mathcal R  \ar[r]^-r \ar[d]_-{\sigma _{-1}}& \mathcal T  
\ar[d]^-{(\mathcal A _{\psi _{\Lambda , y_0}})} \\
\mathbf{h}_{-1} (\mathcal R )[-1]  \ar[r]_-{e_\psi}& \mathcal A }
\end{split}
\end{align}
Then, by the octahedral axiom \cite{MR751966}*{Prop. 1.1.11},  we obtain
the following commutative diagram where the rows and columns
are distinguished triangles in $\DMeff{k}$ \eqref{main.dist.triang}, \eqref{diag.principal}:
\begin{align}  \label{diag.eff.regmot}
\begin{split}
\xymatrix@C=1.4pc{
\tau _{\geq 0} \mathcal R \ar[r] \ar[d]_-{t_0}& \mathcal T _{\psi} \ar[r] \ar[d]& 
 \mathcal K _{ \psi}  \ar[d]^-{k_\psi} \\
\mathcal R  \ar[r]^-r \ar[d]_-{\sigma _{-1}}& \mathcal T  \ar[r]^-p 
\ar[d]^-{(\mathcal A _{\psi _{\Lambda , y_0}})}
 & f_0(M(X)(-d+n)[-2d+2n]) \cong \varphi _{d-n}(M(X))\ar[d]^-{\Psi} \\
\mathbf{h}_{-1} (\mathcal R )[-1]  \ar[r]_-{e_\psi}& \mathcal A \ar[r]_-{a_\psi}&
\mathcal E _{\psi}}
\end{split}
\end{align}

\subsubsection{}  \label{ss.ext.HINST}
We observe that $\mathcal E _{\psi} \in \HINST{k}$,
since $\{ \mathbf{h}_i :\DMeff{k}\rightarrow
\HINST{k}, i\in \mathbb Z\}$ is a cohomological functor
 \cite{MR751966}*{Thm. 1.3.6} and $\mathcal A , \mathbf h _{-1}(\mathcal R ) \in
 \HINST{k}$.  Furthermore,  we notice that 
 $\mathcal A$ (being the functor of points of an abelian variety)
and $\mathbf h _{-1}(\mathcal R)$ 
 are birational sheaves \eqref{sss.h-1R.birat}, so,
by \cite{MR3737321}*{Prop. 2.6.2}, we deduce that
 $\mathcal E _{\psi}$ is also a birational sheaf.
 
\begin{rmk}
The commutative diagram \eqref{diag.eff.regmot} provides a
motivic presentation for a
regular homomorphism $\psi: CH^n_{\mathrm{alg}}(X)\rightarrow A(k)$
\eqref{def.reg.hom}.
\end{rmk}

Now, we consider the following commutative
diagram of abelian groups induced by \eqref{diag.eff.regmot}:
\begin{align}  \label{diag.ev.regmot}
\begin{split}
\xymatrix@C=1.7pc{  \Hom_{\DMeff{k}}(\sphere , \mathcal T) \ar[r]^-{p_\ast} 
\ar[d]^-{(\mathcal A _{\psi _{\Lambda , y_0}})_\ast}
& \Hom_{\DMeff{k}}(\sphere , \varphi _{d-n}(M(X))) \ar[d]^-{\Psi _\ast} 
\overset{(1)}{\cong} CH^n(X)\\ 
A(k)\underset{(2)}{\cong} \Hom_{\DMeff{k}}(\sphere ,\mathcal A) \ar[r]_-{a_{\psi \ast}} & 
\Hom_{\DMeff{k}}(\sphere , \mathcal E _{\psi}) \cong \Gamma (k , 
\mathcal E _{\psi})}
\end{split}
\end{align}
where the isomorphisms $(1)$, $(2)$ follow, respectively, from
\eqref{rmk.comp.indmap.a} and
the fact that $\mathcal A \in \HINST{k}$ is the functor of points
of the abelian variety $A$ \eqref{ss.nt.abvar}.  Thus,
combining \eqref{rmk.comp.indmap.b} with
\eqref{action.corr} and \eqref{prop.indmap.ev},
we obtain the following commutative diagram of abelian groups:

\begin{align}  \label{diag.ind.reghom}
\begin{split}
\xymatrix{0 \ar[r]& CH^n_{\mathrm{alg}}(X) \ar[r] \ar[d]_-\psi& CH^n (X) 
\ar[d]^-{\Psi _\ast} \\
0 \ar[r]& A(k) \ar[r]_-{a_{\psi \ast}} & \Gamma (k, \mathcal E _{\psi})}
\end{split}
\end{align}
where $\psi$ is the original regular homomorphism \eqref{ind.reg.A}, and the
injectivity of $a_{\psi \ast}$ follows from the distinguished triangle in the bottom
row of
 \eqref{diag.eff.regmot}, since
$0= \Hom _{\DMeff{k}}(\sphere , \mathbf h _{-1}(\mathcal R )[-1])$.

\begin{prop}  \label{prop.comp.ker}
With the notation and conditions of \eqref{ind.reg.A}-\eqref{diag.ind.reghom}.
Then, $\mathrm{ker}(\psi) = \mathrm{ker}(\Psi _\ast)$.
\end{prop}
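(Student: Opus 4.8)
The plan is to establish the two inclusions $\mathrm{ker}(\psi)\subseteq\mathrm{ker}(\Psi_\ast)$ and $\mathrm{ker}(\Psi_\ast)\subseteq\mathrm{ker}(\psi)$ separately, viewing both as subgroups of $CH^n(X)$. The first inclusion is immediate from the commutativity of \eqref{diag.ind.reghom}: if $\alpha\in\mathrm{ker}(\psi)\subseteq CH^n_{\mathrm{alg}}(X)\subseteq CH^n(X)$, then $\Psi_\ast(\alpha)=a_{\psi\ast}(\psi(\alpha))=a_{\psi\ast}(0)=0$. For the second inclusion the essential point is that every class annihilated by $\Psi_\ast$ already lies in $CH^n_{\mathrm{alg}}(X)$; once this is known, \eqref{diag.ind.reghom} finishes the job, since for $\alpha\in\mathrm{ker}(\Psi_\ast)\subseteq CH^n_{\mathrm{alg}}(X)$ we get $a_{\psi\ast}(\psi(\alpha))=\Psi_\ast(\alpha)=0$, hence $\psi(\alpha)=0$ by the injectivity of $a_{\psi\ast}$ recorded just after \eqref{diag.ind.reghom}.

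To prove $\mathrm{ker}(\Psi_\ast)\subseteq CH^n_{\mathrm{alg}}(X)$ I would apply the cohomological functor $\Hom_{\DMeff{k}}(\sphere,-)$ to the distinguished triangles of the $3\times 3$ diagram \eqref{diag.eff.regmot}. Denote by $g\colon\mathcal{T}_\psi\rightarrow\mathcal{T}$ the first map of the middle column and by $q\colon\mathcal{T}_\psi\rightarrow\mathcal{K}_\psi$ the second map of the top row. First, the right-hand column $\mathcal{K}_\psi\xrightarrow{k_\psi}\varphi_{d-n}(M(X))\xrightarrow{\Psi}\mathcal{E}_\psi$ yields, via \eqref{rmk.comp.indmap.a}, the exact sequence $\Hom_{\DMeff{k}}(\sphere,\mathcal{K}_\psi)\xrightarrow{k_{\psi\ast}}CH^n(X)\xrightarrow{\Psi_\ast}\Hom_{\DMeff{k}}(\sphere,\mathcal{E}_\psi)$, so that $\mathrm{ker}(\Psi_\ast)=\mathrm{im}(k_{\psi\ast})$. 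Second, the top row $\tau_{\geq 0}\mathcal{R}\rightarrow\mathcal{T}_\psi\xrightarrow{q}\mathcal{K}_\psi$ gives that $\mathrm{coker}(q_\ast)$ embeds into $\Hom_{\DMeff{k}}(\sphere,(\tau_{\geq 0}\mathcal{R})[1])$; but since $\spec{k}$ is a point for the Nisnevich topology one has $\Hom_{\DMeff{k}}(\sphere,N)\cong\Gamma(\spec{k},\mathbf{h}_0(N))$ for every $N\in\DMeff{k}$ (the same reasoning as in the proof of \eqref{prop.main.tech}), and $\mathbf{h}_0((\tau_{\geq 0}\mathcal{R})[1])=\mathbf{h}_{-1}(\tau_{\geq 0}\mathcal{R})=0$ because $\tau_{\geq 0}\mathcal{R}\in(\DMeff{k})_{\geq 0}$; hence $q_\ast$ is surjective. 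Third, the commutativity of the upper-right square of \eqref{diag.eff.regmot} reads $k_\psi\circ q=p\circ g$, so $k_{\psi\ast}\circ q_\ast=p_\ast\circ g_\ast$. Putting these together, $\mathrm{ker}(\Psi_\ast)=\mathrm{im}(k_{\psi\ast})=\mathrm{im}(k_{\psi\ast}\circ q_\ast)=\mathrm{im}(p_\ast\circ g_\ast)\subseteq\mathrm{im}(p_\ast)=CH^n_{\mathrm{alg}}(X)$, where the last equality is \eqref{rmk.comp.indmap.b}. Combined with the first paragraph, this gives $\mathrm{ker}(\psi)=\mathrm{ker}(\Psi_\ast)$.

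The step I expect to be the main obstacle is the surjectivity of $q_\ast$; everything else is formal manipulation inside \eqref{diag.eff.regmot}. The surjectivity rests on the vanishing $\Hom_{\DMeff{k}}(\sphere,(\tau_{\geq 0}\mathcal{R})[1])=0$, which uses both the connectivity estimate $\tau_{\geq 0}\mathcal{R}\in(\DMeff{k})_{\geq 0}$ and the fact that $\Hom_{\DMeff{k}}(\sphere,-)$ only sees $\mathbf{h}_0$ over the point $\spec{k}$ (degeneration of the hypercohomology spectral sequence), so I would make sure to state that fact carefully. An alternative would be to identify the composite of $\Psi_\ast\colon CH^n(X)\rightarrow\Gamma(\spec{k},\mathcal{E}_\psi)$ with the connecting homomorphism $\Gamma(\spec{k},\mathcal{E}_\psi)\rightarrow\Gamma(\spec{k},\mathbf{h}_{-1}(\mathcal{R}))\cong NS^n(X)$ coming from the bottom row of \eqref{diag.eff.regmot} and recognize it as the canonical projection $CH^n(X)\twoheadrightarrow NS^n(X)$, whose kernel is $CH^n_{\mathrm{alg}}(X)$; but verifying that identification requires essentially the same bookkeeping, so I would proceed with the diagram chase above.
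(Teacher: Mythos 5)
Your proof is correct and follows essentially the same route as the paper's: the inclusion $\mathrm{ker}(\psi)\subseteq\mathrm{ker}(\Psi_\ast)$ from the commutativity of \eqref{diag.ind.reghom}, and the reverse inclusion by the same chase through the right column and top row of \eqref{diag.eff.regmot}, reducing to the vanishing $\Hom_{\DMeff{k}}(\sphere,(\tau_{\geq 0}\mathcal R)[1])=0$, which the paper also justifies by connectivity together with the fact that $\spec{k}$ is a point for the Nisnevich topology, and then invoking the injectivity of $a_{\psi\ast}$ and $\mathrm{im}(p_\ast)=CH^n_{\mathrm{alg}}(X)$ from \eqref{rmk.comp.indmap.b}. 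The only difference is presentational: you make the lifting step explicit by naming the maps $g$, $q$ and phrasing it as surjectivity of $q_\ast$, which is exactly the implicit step in the paper's argument.
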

\begin{proof}
By the commutativity of \eqref{diag.ind.reghom}, we conclude that 
$\mathrm{ker}(\psi) \subseteq \mathrm{ker}(\Psi _\ast)$.

On the other hand, let $\gamma \in \mathrm{ker}(\Psi _\ast)\subseteq
\Hom_{\DMeff{k}}(\sphere , \varphi _{d-n}(M(X)))$.  By construction, 
\eqref{diag.ind.reghom} is induced by \eqref{diag.ev.regmot}, 
so, by the injectivity of $a_{\psi \ast}$  \eqref{diag.ind.reghom}
it suffices to see that $\gamma$ is in
the image of $p_\ast$ in \eqref{diag.ev.regmot}. 

Now, we recall that the right column in
\eqref{diag.eff.regmot} is a distinguished triangle in $\DMeff{k}$, 
so, we deduce that
$\gamma$ is in the image of $k_{\psi \ast}: \Hom _{\DMeff{k}}(\sphere ,
\mathcal K _\psi ) \rightarrow \Hom _{\DMeff{k}}(\sphere , \varphi _{d-n}(M(X)))$.
Then, we observe 
that the top row of the commutative
diagram \eqref{diag.eff.regmot} is a distinguished triangle in $\DMeff{k}$, so, 
in order to show that $\gamma$ is in
the image of $p_\ast$ in \eqref{diag.ev.regmot}, it suffices to prove that:
$\Hom _{\DMeff{k}}(\sphere , \tau _{\geq 0} \mathcal R [1])=0$, which holds,
since
$\spec{k}$ is a point for the Nisnevich topology.
\end{proof}

\subsubsection{}  \label{ss.not.thm1}
With the notation and conditions of \eqref{ind.reg.A} and
\eqref{diag.eff.regmot}.  We will write $M^n_{\psi}(X) \in \DMeff{k}$ for 
$\mathcal K _{\psi} (d-n)[2d-2n]$ \eqref{diag.eff.regmot}, and let
$c_{\psi}:M^n_{\psi}(X)\rightarrow  f_{d-n}M(X)$ be the  map $k_\psi (d-n)[2d-2n]$
in $\DMeff{k}$ (see \eqref{diag.eff.regmot} and
\cite{MR3614974}*{3.3.3(2)}).

We define $z_\psi : M^n_{\psi}(X)\rightarrow M(X)$ to be the following composition:
\begin{align*}
\xymatrix{M^n_{\psi}(X) \ar[r]^-{c_\psi}& f_{d-n}M(X) \ar[r]^-{\epsilon _{d-n}^{M(X)}}&
 M(X)}
\end{align*}
where $\epsilon _{d-n}$ is the counit of the adjunction 
$(i_{d-n}, r_{d-n})$ \eqref{eq.slice.filtration}.  Now, we consider
the  morphism of abelian groups induced by $z _\psi$:
\begin{align}  \label{diag.main.indmap}
\begin{split}
\xymatrix{\Hom _{\DMeff{k}}(\sphere (d-n)[2d-2n],
M^n_{\psi}(X)) \ar[d]^-{z _{\psi \ast}} \\
\Hom _{\DMeff{k}}(\sphere (d-n)[2d-2n], M(X))}
\end{split}
\end{align}

\begin{thm}  \label{thm1}
With the notation and conditions of \eqref{ind.reg.A}-\eqref{diag.main.indmap}.
Under the isomorphism
$\Hom _{\DMeff{k}}(\sphere (d-n)[2d-2n], M(X))\cong CH^n(X)$
\cite{MR1764202}*{Prop. 2.1.4, Thm. 3.2.6},
the image of
$z _{\psi \ast}$  \eqref{diag.main.indmap} is $\mathrm{ker}(\psi)\subseteq CH^n_{\mathrm{alg}}(X)$.
\end{thm}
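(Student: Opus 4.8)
The plan is to reduce the statement to \eqref{prop.comp.ker} by using the twist equivalence to translate everything into the untwisted setting. First I would recall that twisting by $(d-n)[2d-2n]$ is an equivalence of categories, so it induces a bijection
\begin{align*}
\Hom_{\DMeff{k}}(\sphere(d-n)[2d-2n], M^n_\psi(X)) \cong \Hom_{\DMeff{k}}(\sphere, \mathcal K_\psi)
\end{align*}
and similarly $\Hom_{\DMeff{k}}(\sphere(d-n)[2d-2n], f_{d-n}M(X)) \cong \Hom_{\DMeff{k}}(\sphere, f_0(M(X)(-d+n)[-2d+2n])) = \Hom_{\DMeff{k}}(\sphere, \varphi_{d-n}(M(X)))$, using $f_0(E(-m)[-2m]) \cong f_m(E)(-m)[-2m]$ \cite{MR3614974}*{3.3.3(2)} as in \eqref{not.varphi}. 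Under these identifications, $c_{\psi\ast} = k_\psi(d-n)[2d-2n]_\ast$ corresponds to $k_{\psi\ast}$.

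Next I would identify the composite with $z_\psi$ in terms of the already-constructed diagrams. Since $z_\psi = \epsilon_{d-n}^{M(X)} \circ c_\psi$, and $\epsilon_{d-n}^{M(X)}$ twisted down by $(d-n)[2d-2n]$ is the counit $\epsilon_0$ appearing in \eqref{diag.modeffcov}, the induced map $z_{\psi\ast}$ corresponds under the twist equivalence to the composite
\begin{align*}
\Hom_{\DMeff{k}}(\sphere, \mathcal K_\psi) \xrightarrow{k_{\psi\ast}} \Hom_{\DMeff{k}}(\sphere, \varphi_{d-n}(M(X))) \xrightarrow{\epsilon_{0\ast}} \Hom_{\DM{k}}(\sphere, M(X)(-d+n)[-2d+2n]),
\end{align*}
but $\epsilon_{0\ast}$ is an isomorphism by \eqref{not.varphi}, and the target is $CH^n(X)$ compatibly with the isomorphism in the statement by \eqref{rmk.comp.indmap.a} and \cite{MR1764202}*{Prop. 2.1.4, Thm. 3.2.6}. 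So the image of $z_{\psi\ast}$ equals the image of $k_{\psi\ast}$ inside $\Hom_{\DMeff{k}}(\sphere, \varphi_{d-n}(M(X))) \cong CH^n(X)$.

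Now I would compute the image of $k_{\psi\ast}$. Since the right-hand column of \eqref{diag.eff.regmot}, namely $\mathcal K_\psi \xrightarrow{k_\psi} \varphi_{d-n}(M(X)) \xrightarrow{\Psi} \mathcal E_\psi$, is a distinguished triangle in $\DMeff{k}$ and $\Hom_{\DMeff{k}}(\sphere, \mathcal E_\psi[-1]) = 0$ (since $\mathcal E_\psi \in \HINST{k}$ lies in the heart, so has no negative homotopy sheaves, and $\spec k$ is a Nisnevich point), the long exact sequence shows that the image of $k_{\psi\ast}$ is exactly $\mathrm{ker}(\Psi_\ast)$. By \eqref{prop.comp.ker}, $\mathrm{ker}(\Psi_\ast) = \mathrm{ker}(\psi) \subseteq CH^n_{\mathrm{alg}}(X)$, which is the desired conclusion.

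The only point requiring care — and the main obstacle — is bookkeeping the compatibility of the twist equivalence with the various counit maps $\epsilon_0$ and $\epsilon_{d-n}$ and with the identification $f_0(M(X)(-d+n)[-2d+2n]) \cong \varphi_{d-n}(M(X))$, so that the square relating $z_{\psi\ast}$ to $k_{\psi\ast}$ genuinely commutes; this is a formal consequence of the naturality of $\epsilon_m$ under twisting, together with \cite{MR3614974}*{3.3.3(2)} and the definition of $c_\psi$ in \eqref{ss.not.thm1}, but it must be traced through carefully. Everything else is a direct application of results already established in the excerpt.
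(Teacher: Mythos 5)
Your argument is correct and is essentially the paper's own proof, written out in more detail: the paper likewise combines the distinguished triangle in the right column of \eqref{diag.eff.regmot}, Proposition \eqref{prop.comp.ker}, the universal property of the counit $\epsilon_{d-n}^{M(X)}$, and Voevodsky's cancellation theorem. One small precision: the Hom-bijections you attribute to a ``twist equivalence'' hold in $\DMeff{k}$ not because twisting is an equivalence there (it is not essentially surjective), but because it is fully faithful, i.e.\ by the cancellation theorem \cite{MR2804268}, which is exactly the input the paper cites.
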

\begin{proof}
We observe that  the right column in
\eqref{diag.eff.regmot} is a distinguished triangle in $\DMeff{k}$.  Thus,
since $z_\psi = \epsilon _{d-n}^{M(X)} \circ k_\psi (d-n)[2d-2n]$,
we deduce the result by combining
 \eqref{prop.comp.ker}, Voevodsky's cancellation theorem
\cite{MR2804268} and the universal property of the counit 
$\epsilon _{d-n}^{M(X)}:f_{d-n}M(X)\rightarrow M(X)$ \cite{MR3614974}*{3.3.1}. 
\end{proof}

\begin{rmk}
Given $X \in SmProj_k$ and a regular homomorphism $\psi: CH^n_{\mathrm{alg}}
(X)\rightarrow A(k)$ \eqref{def.reg.hom},  
the commutative diagram \eqref{diag.comm.square} in $\DMeff{k}$ is canonical.
In contrast, $\mathcal K _\psi \in \DMeff{k}$ \eqref{diag.eff.regmot} and
$M^n _\psi (X)\in \DMeff{k}$ \eqref{ss.not.thm1}
are only unique up to a non-canonical isomorphism in $\DMeff{k}$.
\end{rmk}

\subsection{Intermediate Jacobians}  \label{ss.int.jac}
With the notation and conditions of \eqref{thm1}.
Assume further that $k=\mathbb C$.  Let $J^n(X)$ be the Weil-Griffiths
$n$-th intermediate Jacobian \cite{MR0050330}*{\S IV.24-26},
\cite{MR0233825}*{Ex. 2.1} (for a comparison
see \cite{MR0233825}*{Thm. 2.54}).  Consider
Weil's
Abel-Jacobi map $\psi _n : CH^n_{\mathrm{alg}}(X)
\rightarrow J^n (X)$ \cite{MR0050330}*{\S IV.27}.  The kernel of $\psi _n$ is the
subgroup $CH^n_{AJ}(X)\subseteq CH^n_{\mathrm{alg}}(X)$ of algebraic cycles
Abel-Jacobi equivalent to zero.

Furthermore, Lieberman shows
\cite{MR238857}*{Thm. 6.5(iii)} that the image of $\psi _n$ is given by the $\mathbb C$-points of an
abelian variety $J^n_a(X)$
\cite{MR238857}*{Defs. 5.6, 4.16}, and that
$\psi _n : CH^n_{\mathrm{alg}}(X)
\rightarrow J^n_a (X)(\mathbb C)$ is a regular homomorphism
\cite{MR238857}*{Thm. 6.5(i)}, \eqref{def.reg.hom}.  

\begin{thm}
With the notation and conditions of \eqref{ss.int.jac}.
There exists
 $M^n_{AJ}(X)\in \DMeff{k}$, and a map
$z_{AJ}^n: M^n_{AJ}(X) \rightarrow M(X)$ in $\DMeff{k}$,
such that under the isomorphism
$\Hom _{\DMeff{k}}(\sphere (d-n)[2d-2n], M(X))\cong CH^n(X)$
\cite{MR1764202}*{Prop. 2.1.4, Thm. 3.2.6},
the image of the induced map:
\begin{align*}
\xymatrix{ \Hom _{\DMeff{k}}(\sphere (d-n)[2d-2n], M^n_{AJ}(X)) 
\ar[d]^-{z _{AJ \ast}^n} \\
\Hom _{\DMeff{k}}(\sphere (d-n)[2d-2n], M(X))\cong CH^n(X)}
\end{align*}
is the subgroup of algebraic cycles Abel-Jacobi equivalent to
zero: $CH^n_{AJ}(X)
\subseteq CH^n_{\mathrm{alg}}(X)$.
\end{thm}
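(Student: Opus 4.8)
The plan is to obtain this statement as a direct application of \eqref{thm1} to the Abel--Jacobi regular homomorphism. Since $k=\mathbb C$ is algebraically closed, we are in the situation of \eqref{not.corr.act}, so the entire construction of \eqref{ind.reg.A}--\eqref{diag.main.indmap} is available for any regular homomorphism \eqref{def.reg.hom} defined on $CH^n_{\mathrm{alg}}(X)$ with values in the $k$-points of an abelian variety.

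First I would record the only input needed from the theory of intermediate Jacobians, which was already recalled in \eqref{ss.int.jac}: by Lieberman \cite{MR238857}*{Thm. 6.5} the image $J^n_a(X)$ of Weil's Abel--Jacobi map is an abelian variety over $\mathbb C$, so $J^n_a(X)\in Abv_k$, and $\psi_n : CH^n_{\mathrm{alg}}(X)\rightarrow J^n_a(X)(\mathbb C)$ is a regular homomorphism in the sense of Samuel. Hence we may take $\psi=\psi_n$ and $A=J^n_a(X)$ in \eqref{ind.reg.A}, and every construction of that subsection applies.

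Next I would apply the construction of \eqref{ss.not.thm1} verbatim to this $\psi_n$: set $M^n_{AJ}(X):=M^n_{\psi_n}(X)=\mathcal K_{\psi_n}(d-n)[2d-2n]\in\DMeff{k}$ and define $z^n_{AJ}:=z_{\psi_n}:M^n_{AJ}(X)\rightarrow M(X)$ as the composite of $c_{\psi_n}$ with the counit $\epsilon_{d-n}^{M(X)}$. Then \eqref{thm1}, applied to $\psi_n$, identifies the image of $z_{\psi_n\ast}=z^n_{AJ\ast}$ in \eqref{diag.main.indmap}, under the isomorphism $\Hom_{\DMeff{k}}(\sphere(d-n)[2d-2n],M(X))\cong CH^n(X)$, with $\mathrm{ker}(\psi_n)\subseteq CH^n_{\mathrm{alg}}(X)$. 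By the very definition recalled in \eqref{ss.int.jac}, $\mathrm{ker}(\psi_n)=CH^n_{AJ}(X)$, which is exactly the asserted image.

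The main point --- and in fact the only place where anything beyond \eqref{thm1} is used --- is the verification that $\psi_n$ genuinely falls within the scope of our first main result, i.e.\ that it is a regular homomorphism with abelian-variety target over $\mathbb C$; this is precisely Lieberman's theorem, so no further obstacle remains, and the proof is a matter of specialization.
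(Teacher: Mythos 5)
Your proof is correct and coincides with the paper's own argument: both take $\psi=\psi_n$ with target $J^n_a(X)(\mathbb C)$ (regular by Lieberman), set $M^n_{AJ}(X)=M^n_{\psi_n}(X)$, $z^n_{AJ}=z_{\psi_n}$ as in \eqref{ss.not.thm1}, and apply \eqref{thm1} together with the identification $\mathrm{ker}(\psi_n)=CH^n_{AJ}(X)$. No gaps.
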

\begin{proof}
We consider the regular homomorphism
$\psi _n : CH^n_{\mathrm{alg}}(X)
\rightarrow J^n_a (X)(\mathbb C)$.  Now let
$M^n_{AJ}(X)= M^n_{\psi _n}(X) \in \DMeff{k}$ \eqref{ss.not.thm1} and
$z_{AJ}^n=z_{\psi _n}: M^n_{AJ}(X) \rightarrow M(X)$ \eqref{ss.not.thm1}.
Then, the result follows from \eqref{thm1}.
\end{proof}

\subsection{Incidence equivalence}  \label{ss.inc.eq}
With the notation and conditions of \eqref{thm1}.  Let
 $\alpha \in CH^n_{\mathrm{alg}}(X)$.  We say that
$\alpha$ is \emph{incident equivalent} to zero \cite{MR0309937}*{p. 6-7},
if for every
$Y\in SmProj_k$ and every $\beta \in CH^{d-n+1}(X\times Y)$:
$p_{Y\ast}(p_X ^\ast (\alpha)\cdot \beta)=0 \in CH^1(Y)$, where
$p_X :X\times Y \rightarrow X$, $p_Y : X\times Y \rightarrow Y$ are the
projections.  We will write $CH^n_{\mathrm{inc}}(X)\subseteq CH^n_{\mathrm{alg}}(X)$
for the subgroup of algebraic cycles incident equivalent to zero.

Furthermore, it is known
(see Grothendieck-Kleiman \cite{MR0424807}*{p. 446-447}
and H. Saito \cite{MR542191}*{Thm. 3.6}) that
there exists an abelian variety $Pic ^n(X)$ and a surjective
regular homomorphism
$\psi _n : CH^n_{\mathrm{alg}}(X)\rightarrow Pic ^n (X)(k)$, such that
$\mathrm{ker} (\psi _n)= CH^n_{\mathrm{inc}}(X)$.

\begin{thm}
With the notation and conditions of \eqref{ss.inc.eq}.
There exists
 $M^n_{\mathrm{inc}}(X)\in \DMeff{k}$, and a map
$z_{\mathrm{inc}}^n: M^n_{\mathrm{inc}}(X) \rightarrow M(X)$ in $\DMeff{k}$,
such that under the isomorphism
$\Hom _{\DMeff{k}}(\sphere (d-n)[2d-2n], M(X))\cong CH^n(X)$
\cite{MR1764202}*{Prop. 2.1.4, Thm. 3.2.6},
the image of the induced map:
\begin{align*}
\xymatrix{ \Hom _{\DMeff{k}}(\sphere (d-n)[2d-2n], M^n_{\mathrm{inc}}(X)) 
\ar[d]^-{z _{\mathrm{inc} \ast}^n} \\
\Hom _{\DMeff{k}}(\sphere (d-n)[2d-2n], M(X))\cong CH^n(X)}
\end{align*}
is the subgroup of algebraic cycles incident equivalent to zero:
$CH^n_{\mathrm{inc}}(X)
\subseteq CH^n_{\mathrm{alg}}(X)$.
\end{thm}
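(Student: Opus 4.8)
The plan is to recognize this statement as an immediate corollary of Theorem \eqref{thm1}, in exactly the same way the Abel-Jacobi theorem \eqref{ss.int.jac} was deduced. The only input needed is the existence, recalled in \eqref{ss.inc.eq} from Grothendieck--Kleiman \cite{MR0424807} and H.~Saito \cite{MR542191}, of an abelian variety $Pic^n(X)$ together with a surjective regular homomorphism $\psi_n : CH^n_{\mathrm{alg}}(X)\rightarrow Pic^n(X)(k)$ whose kernel is precisely $CH^n_{\mathrm{inc}}(X)$. Since $\psi_n$ is a regular homomorphism in the sense of \eqref{def.reg.hom}, all of the constructions of \S\ref{s.Vmotreg} apply to it verbatim.

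First I would invoke \eqref{ss.not.thm1} applied to $\psi=\psi_n$ to obtain the object $M^n_{\psi_n}(X)\in\DMeff{k}$ together with the map $z_{\psi_n}:M^n_{\psi_n}(X)\rightarrow M(X)$ in $\DMeff{k}$. Then I would simply set $M^n_{\mathrm{inc}}(X)= M^n_{\psi_n}(X)$ and $z^n_{\mathrm{inc}}=z_{\psi_n}$. By \eqref{thm1}, under the isomorphism $\Hom_{\DMeff{k}}(\sphere(d-n)[2d-2n], M(X))\cong CH^n(X)$ of \cite{MR1764202}*{Prop.~2.1.4, Thm.~3.2.6}, the image of the induced map $z_{\psi_n\ast}$ is $\mathrm{ker}(\psi_n)\subseteq CH^n_{\mathrm{alg}}(X)$. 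By the cited results \cite{MR0424807}*{p.~446-447}, \cite{MR542191}*{Thm.~3.6}, $\mathrm{ker}(\psi_n)=CH^n_{\mathrm{inc}}(X)$, so the image of $z^n_{\mathrm{inc}\ast}$ is exactly $CH^n_{\mathrm{inc}}(X)$, which is the assertion.

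There is essentially no obstacle here: the content has been fully absorbed into \eqref{thm1}, and the surjectivity of $\psi_n$ is not even needed for the statement. The only thing to double-check is that the cited characterization of incidence equivalence via the kernel of $\psi_n$ matches the definition given in \eqref{ss.inc.eq}; this is a bookkeeping point going back to Grothendieck--Kleiman and Saito, not a new argument. Accordingly the proof is two lines long.

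\begin{proof}
We consider the surjective regular homomorphism
$\psi _n : CH^n_{\mathrm{alg}}(X)
\rightarrow Pic ^n (X)(k)$ with $\mathrm{ker}(\psi_n)=CH^n_{\mathrm{inc}}(X)$
\eqref{ss.inc.eq}.  Now let
$M^n_{\mathrm{inc}}(X)= M^n_{\psi _n}(X) \in \DMeff{k}$ \eqref{ss.not.thm1} and
$z_{\mathrm{inc}}^n=z_{\psi _n}: M^n_{\mathrm{inc}}(X) \rightarrow M(X)$ \eqref{ss.not.thm1}.
Then, by \eqref{thm1}, the image of the induced map $z^n_{\mathrm{inc}\ast}$ is
$\mathrm{ker}(\psi_n)=CH^n_{\mathrm{inc}}(X)\subseteq CH^n_{\mathrm{alg}}(X)$.
\end{proof}
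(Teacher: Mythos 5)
Your proof is correct and coincides with the paper's own argument: both take the regular homomorphism $\psi_n : CH^n_{\mathrm{alg}}(X)\rightarrow Pic^n(X)(k)$ of Grothendieck--Kleiman and Saito, set $M^n_{\mathrm{inc}}(X)=M^n_{\psi_n}(X)$ and $z^n_{\mathrm{inc}}=z_{\psi_n}$ as in \eqref{ss.not.thm1}, and conclude by \eqref{thm1} that the image of $z^n_{\mathrm{inc}\ast}$ is $\mathrm{ker}(\psi_n)=CH^n_{\mathrm{inc}}(X)$. Nothing further is needed.
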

\begin{proof}
Consider the regular homomorphism
$\psi _n : CH^n_{\mathrm{alg}}(X)
\rightarrow Pic^n(X)(k)$ \cite{MR0424807}*{p. 446-447},
\cite{MR542191}*{Thm. 3.6}, and define $M^n_{\mathrm{inc}}(X)$,
$z_{\mathrm{inc}}^n$ to be, respectively,
$M^n_{\psi _n}(X) \in \DMeff{k}$ \eqref{ss.not.thm1},
$z_{\psi _n}: M^n_{\mathrm{inc}}(X) 
\rightarrow M(X)$ \eqref{ss.not.thm1}.  Then, the
result follows from  \eqref{thm1}.
\end{proof}

\subsection{Effective covers and slices}  \label{sec.not.thm2}
Applying the triangulated functor $f_m$ \eqref{eq.slice.filtration}
to the distinguished triangle in 
the right column of \eqref{diag.eff.regmot}, 
we obtain the following distinguished triangle in $\DM{k}$:
\begin{align}  \label{diag.dist.tr.thm2}
\xymatrix{f_m(\mathcal K _\psi)\ar[r]^-{f_m(k_\psi)} &f _m(\varphi _{d-n}(M(X)))
\ar[r]^-{f_m(\Psi)} & f_m (\mathcal E _\psi).}
\end{align}

\begin{thm}  \label{thm2}
With the notation and conditions of \eqref{ind.reg.A}-\eqref{diag.main.indmap} and
\eqref{diag.dist.tr.thm2}.
\begin{enumerate}
\item \label{thm2.a}For $m\geq 1$, $f_m(\mathcal E _\psi)\cong 0$.
\item  \label{thm2.b}For $r\geq 1$,  the map $f_{d-n+r}(z _\psi ): 
f_{d-n+r}(M^n_{\psi}(X))  \rightarrow
f_{d-n+r}(M(X))$ is an isomorphism in $\DMeff{k}$
\eqref{ss.not.thm1}.
\item  \label{thm2.c}For $r\geq 1$, the map $s_{d-n+r}(z_\psi ):
s_{d-n+r}(M^n_{\psi}(X))  \rightarrow
s_{d-n+r}(M(X))$ is an isomorphism in $\DMeff{k}$
\eqref{diag.slice.tr}.
\end{enumerate}
\end{thm}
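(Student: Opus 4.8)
The three assertions are logically chained: (1) gives the vanishing $f_m(\mathcal E_\psi)\cong 0$ for $m\geq 1$; (2) deduces the isomorphism on $(d-n+r-1)$-effective covers for $r\geq 1$; and (3) deduces the isomorphism on slices for $r\geq 1$. The key input for (1) is that $\mathcal E_\psi\in\HINST{k}$ is a \emph{birational sheaf} (established in \eqref{ss.ext.HINST}). The first step is to recall that for a birational sheaf $\mathcal F\in\HINST{k}$ one has $f_1(\mathcal F)\cong 0$ in $\DM{k}$; indeed, by \cite{MR2600283}*{Lem. 5.9}, \cite{MR2249535}*{Prop. 1.1} and \eqref{not.varphi}, $\varphi_1(\mathcal F)\cong \inthomeff(\sphere(1)[2],\mathcal F)$, and a birational sheaf has trivial internal Hom out of the Tate twist $\sphere(1)[2]$ (equivalently, $\mathcal F(1)$-cohomology vanishes on fields and hence on all of $\HINST{k}$ since birational sheaves are detected on function fields). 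Since the slice filtration is a tower, $f_m(\mathcal E_\psi)$ is a retract-compatible truncation of $f_1(\mathcal E_\psi)\cong 0$ for all $m\geq 1$, so $f_m(\mathcal E_\psi)\cong 0$, which is (1).

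For (2), apply the triangulated functor $f_{d-n+r}$ to the right column of \eqref{diag.eff.regmot}, which by \eqref{diag.dist.tr.thm2} gives the distinguished triangle
\begin{align*}
f_{d-n+r}(\mathcal K_\psi)\xrightarrow{f_{d-n+r}(k_\psi)} f_{d-n+r}(\varphi_{d-n}(M(X)))\xrightarrow{f_{d-n+r}(\Psi)} f_{d-n+r}(\mathcal E_\psi).
\end{align*}
By (1), for $r\geq 1$ we have $d-n+r\geq d-n+1$; but here we must be careful: $\mathcal E_\psi$ lives "in degree $d-n$" only after untwisting. The point is that $f_m(\mathcal E_\psi)$ with the relevant shift corresponds to $f_{m-(d-n)}$ of the untwisted sheaf. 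More precisely, using $f_0(E(-m)[-2m])\cong f_m(E)(-m)[-2m]$ \cite{MR3614974}*{3.3.3(2)} together with the identification of $\varphi_{d-n}(M(X))=f_0(M(X)(-d+n)[-2d+2n])$ and the fact that $\mathcal E_\psi$ arose as a cofiber at the $\varphi_{d-n}$-level, one reduces the vanishing $f_{d-n+r}(\mathcal E_\psi)\cong 0$ for $r\geq 1$ to $f_r$ of a birational sheaf for $r\geq 1$, which is (1). Hence $f_{d-n+r}(k_\psi)$ is an isomorphism in $\DM{k}$ for $r\geq 1$. Now $M^n_\psi(X)=\mathcal K_\psi(d-n)[2d-2n]$ and $z_\psi=\epsilon^{M(X)}_{d-n}\circ c_\psi$ with $c_\psi=k_\psi(d-n)[2d-2n]$, and $\epsilon^{M(X)}_{d-n}:f_{d-n}M(X)\to M(X)$ induces isomorphisms on $f_{d-n+r}$ for $r\geq 0$ (since $f_{d-n+r}f_{d-n}=f_{d-n+r}$ when $r\geq 0$). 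Combining, $f_{d-n+r}(z_\psi)$ is identified, after the untwist $(-d+n)[-2d+2n]$ and using $f_{d-n+r}(-)(-d+n)[-2d+2n]\cong f_r((-)(-d+n)[-2d+2n])\circ(\text{twist})$, with $f_{d-n+r}(k_\psi)$ composed with $f_{d-n+r}(\epsilon_{d-n}^{M(X)})$, both isomorphisms; this yields (2).

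For (3), apply the natural distinguished triangle \eqref{diag.slice.tr}: for any $E$, $f_{d-n+r+1}E\to f_{d-n+r}E\to s_{d-n+r}E$. Naturality in $E$ gives a morphism of triangles from the one for $M^n_\psi(X)$ to the one for $M(X)$ induced by $z_\psi$. By (2), the left map $f_{d-n+r+1}(z_\psi)$ (apply (2) with $r+1\geq 1$) and the middle map $f_{d-n+r}(z_\psi)$ (apply (2) with $r\geq 1$) are both isomorphisms; by the five lemma for triangulated categories \cite{MR751966}*{Prop. 1.1.20}, the right map $s_{d-n+r}(z_\psi)$ is an isomorphism, which is (3).

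\textbf{Main obstacle.} The routine-looking but genuinely delicate point is the bookkeeping of twists and shifts in (1)--(2): one must confirm that the vanishing range "$m\geq 1$" for $f_m(\mathcal E_\psi)$ translates, after the untwist by $(d-n)$ built into the definition of $M^n_\psi(X)=\mathcal K_\psi(d-n)[2d-2n]$, into exactly the range "$r\geq 1$" in the statement, with no off-by-one error, and that the identity $f_0(E(-m)[-2m])\cong f_m(E)(-m)[-2m]$ of \cite{MR3614974}*{3.3.3(2)} is applied in the correct direction throughout. The conceptual heart—why a birational sheaf is killed by $f_1$—is standard once one invokes \eqref{not.varphi} together with \cite{MR3737321}, but assembling this cleanly with the compatibilities of $f_m$ under Tate twist is where the care is needed.
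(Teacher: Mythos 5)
Your proof is correct and follows essentially the same route as the paper: the birationality of $\mathcal E_\psi$ \eqref{ss.ext.HINST} kills $f_1(\mathcal E_\psi)$ and hence $f_m(\mathcal E_\psi)$ for $m\geq 1$, then the triangle \eqref{diag.dist.tr.thm2} together with the twist compatibility \cite{MR3614974}*{3.3.3(2)} and the counit $\epsilon_{d-n}^{M(X)}$ give (2), and (3) follows formally. The only cosmetic differences are that your ``careful'' untwisting digression in (2) is unnecessary, since (1) already gives $f_{d-n+r}(\mathcal E_\psi)\cong 0$ verbatim because $d-n+r\geq 1$, and that the paper deduces (3) from $s_m\cong s_m\circ f_m$ rather than from the five lemma, which amounts to the same thing.
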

\begin{proof}
\eqref{thm2.a}:  Since $\DMeff{k} (m) \subseteq \DMeff{k} (1)$ for $m\geq 1$ \eqref{eq.slice.filtration},
we conclude that
$f_m \cong f_m \circ f_1$, so, it is enough to prove
that $f_1(\mathcal E _\psi)\cong 0$.  
Then, we recall that
$\mathcal E _\psi \in \HINST{k}$ is a birational sheaf
\eqref{ss.ext.HINST},
so, by \cite{MR1764200}*{Thm. 5.7} we conclude that for every $Z\in Sm_k$,
$i>0$: $H^i_{\mathrm{Nis}}(Z, \mathcal E _\psi)=0$.  Thus,
by combining 
\cite{MR3737321}*{Cor. 4.5.3 and Lem. 4.5.4} and
\cite{MR2600283}*{Lem. 5.9}, \cite{MR2249535}*{Prop. 1.1}, we deduce that
$f_1(\mathcal E _\psi)=0$, as we wanted.

\eqref{thm2.b}:  
By \eqref{diag.dist.tr.thm2} and \ref{thm2}\eqref{thm2.a}
we conclude that
$f_r(k_\psi)$ is an isomorphism.
Then, we recall that by construction $z_\psi =\epsilon ^{M(X)}_{d-n} \circ
k_\psi (d-n)[2d-2n]$,
$M^n _\psi (X)=\mathcal K _\psi (X)(d-n)[2d-2n]$ \eqref{ss.not.thm1}
and $\varphi _{d-n}(M(X))=f_{d-n}(M(X))(-d+n)[-2d+2n]$
\eqref{not.varphi}, so, by \cite{MR3614974}*{3.3.3(2)} we deduce that
$f_{d-n+r}(k_\psi (d-n)[2d-2n])$ is an isomorphism in $\DMeff{k}$.  Finally,
by the universal property of the counit $\epsilon _{d-n}$  \cite{MR3614974}*{3.3.1}
we conclude that $f_{d-n+r}(\epsilon ^{M(X)}_{d-n})$ is an isomorphism
in $\DMeff{k}$, which implies the result.

\eqref{thm2.c}:  This follows from \ref{thm2}\eqref{thm2.b},
since $s_m\cong s_m\circ f_m$ for $m\in \mathbb Z$.
\end{proof}

\section{Voevodsky's motives and cycles abelian equivalent to zero}  \label{s.abeq}

\subsection{Abelian equivalence}  \label{def.ab.eq}
Let $X\in SmProj_k$ of dimension $d$, 
$1\leq n\leq d$ and $\alpha \in CH^n_{\mathrm{alg}}(X)$.
We say that $\alpha$ is \emph{abelian equivalent to zero}
\cite{MR116010}*{p. 477} if $\alpha \in \ker (\psi)$,
for every abelian variety $A\in SmProj_k$ and every
regular homomorphism $\psi : CH^n_{\mathrm{alg}}(X) \rightarrow A(k)$ 
\eqref{def.reg.hom}.

We will write $CH^n_{\mathrm{ab}}(X)$ for the subgroup of
$CH^n_{\mathrm{alg}}(X)$ consisting of algebraic cycles
abelian equivalent to zero.

\subsubsection{}  \label{ss.1st.step.abeq}
Let $\psi: CH^n_{\mathrm{alg}}(X) \rightarrow A(k)$ be a regular
homomorphism \eqref{def.reg.hom}, and with the notation and conditions of \eqref{ind.reg.A}, \eqref{cons.proj.gen},
\eqref{diag.starting}
and \eqref{diag.principal}. Consider the commutative diagram
\eqref{diag.comm.square} in $\DMeff{k}$:
\begin{align*}
\xymatrix{\mathcal R  \ar[r]^-r \ar[d]_-{\sigma _{-1}}& \mathcal T 
\ar[d]^-{(\mathcal A _{\psi _{\Lambda , y_0}})} \\
\mathbf{h}_{-1} (\mathcal R )[-1]  \ar[r]_-{e_\psi }&  \mathcal A }
\end{align*}

Let $\mathfrak{Ab}\in \HINST{k}\subseteq \DMeff{k}$ be the product
\eqref{def.gp.reg.hom}:
\begin{align}  \label{def.prod.abvar}
\mathfrak{Ab}=
\prod  \limits_{\substack{\psi \in Reg ^n (X,A)\\ A \in Abv_k}}\mathcal A
\end{align}
where $Abv_k$ is the full subcategory of $SmProj_k$ consisting of
abelian varieties and
$\mathcal A$ is the functor of points for an abelian
variety $A$ \eqref{ss.nt.abvar}.

Now, we consider the following maps in $\DMeff{k}$
induced by \eqref{diag.comm.square} on
each direct factor of $\mathfrak{Ab}$:
\begin{align}
\begin{split}
<(\mathcal A _{\psi _{\Lambda , y_0}})>:\mathcal T
\rightarrow \mathfrak{Ab}\\
<e_\psi >: \mathbf{h}_{-1} (\mathcal R )[-1] \rightarrow \mathfrak{Ab}
\end{split}
\end{align}
Then, since \eqref{diag.comm.square} commutes, we obtain
the following commutative diagram in $\DMeff{k}$:
\begin{align}
\begin{split}
\xymatrix{\mathcal R  \ar[r]^-r \ar[d]_-{\sigma _{-1}}& \mathcal T 
\ar[d]^-{<(\mathcal A _{\psi _{\Lambda , y_0}})>} \\
\mathbf{h}_{-1} (\mathcal R )[-1]  \ar[r]_-{<e_\psi >}& 
\mathfrak{Ab} }
\end{split}
\end{align}
Thus, by the octahedral axiom \cite{MR751966}*{Prop. 1.1.11},  we obtain
the following commutative diagram where the rows and columns
are distinguished triangles in $\DMeff{k}$ \eqref{main.dist.triang}, \eqref{diag.principal}:
\begin{align}  \label{diag.main.abeq}
\begin{split}
\xymatrix@C=1.4pc{ \tau _{\geq 0} \mathcal R \ar[d]_-{t_0} \ar[r] & 
\mathcal T _{\mathrm{ab}}
\ar[r] \ar[d]& \mathcal K _{\mathrm{ab}}^n(X) \ar[d]^-{k^n_{\mathrm{ab}}} \\
\mathcal R  \ar[r]^-r \ar[d]_-{\sigma _{-1}}& \mathcal T  
\ar[d]^-{<(\mathcal A _{\psi _{\Lambda , y_0}})>} \ar[r]^-p&  
f_0(M(X)(-d+n)[-2d+2n]) \cong \varphi _{d-n}(M(X))  \ar[d]^-{\Psi ^n _{\mathrm{ab}}}\\
\mathbf{h}_{-1} ( \mathcal R )[-1]  \ar[r]_-{<e_\psi >}& 
 \mathfrak{Ab} \ar[r]_-{\xi _{\mathrm{ab}}^n}& \mathcal E_{\mathrm{ab}}^n}
 \end{split}
\end{align}

\subsubsection{}    \label{ss.ab.HI}
Notice that $\mathcal E ^n _{\mathrm{ab}} \in \HINST{k}$,
since $\{ \mathbf{h}_i :\DMeff{k}\rightarrow
\HINST{k}, i\in \mathbb Z\}$ is a cohomological functor
 \cite{MR751966}*{Thm. 1.3.6} and $\mathfrak{Ab} , \mathbf h _{-1}(\mathcal R ) \in
 \HINST{k}$.  Furthermore, we observe that $\mathfrak{Ab}$ is a birational sheaf
(being the product of birational sheaves \eqref{def.prod.abvar}), so, by an
argument parallel to \eqref{ss.ext.HINST} 
we deduce that $\mathcal E ^n _{\mathrm{ab}}$ 
is a birational sheaf as well.

We observe that \eqref{diag.main.abeq} induces the
following commutative diagram of abelian groups:
\begin{align}
\begin{split}
\xymatrix{ \Hom_{\DMeff{k}}(\sphere , \mathcal T) \ar[r]^-{p_\ast} 
\ar[d]^-{<(\mathcal A _{\psi _{\Lambda , y_0}})> _\ast}
& \Hom_{\DMeff{k}}(\sphere , \varphi _{d-n}(M(X))) \ar[d]^-{\Psi ^n _{\mathrm{ab} \ast}} 
\overset{(1)}{\cong} CH^n(X)\\
 \Hom_{\DMeff{k}}(\sphere ,
\mathfrak{Ab}) \ar[r]_-{\xi ^n _{\mathrm{ab} \ast}} \ar[d]_-{\cong}^-{(2)}& 
\Hom_{\DMeff{k}}(\sphere , \mathcal E ^n _{\mathrm{ab}}) \cong \Gamma (k , 
\mathcal E ^n _{\mathrm{ab}})\\
\prod \limits_{\substack{\psi \in Reg ^n (X,A) \\
	A \in Abv_k}}
A(k) &}
\end{split}
\end{align}
where the isomorphisms $(1)$, $(2)$ follow, respectively, from
\eqref{rmk.comp.indmap.a} and the construction 
of $\mathfrak{Ab}$ \eqref{def.prod.abvar} as a product of
functor of points
for  abelian varieties $A\in Abv_k$ \eqref{ss.nt.abvar}.
Then, by \eqref{rmk.comp.indmap.b},
\eqref{action.corr} and \eqref{prop.indmap.ev}, 
we obtain the following commutative diagram of abelian groups:
\begin{align}  \label{diag.final.ind.abeq}
\begin{split}
\xymatrix{0 \ar[r]& CH^n_{\mathrm{alg}}(X) \ar[r]^-{p_\ast}
 \ar[d]_-{< \psi >} & CH^n (X) 
\ar[d]^-{\Psi ^n _{\mathrm{ab} \ast}} \\
0 \ar[r]& 
\prod \limits_{\substack{\psi \in Reg ^n (X,A) \\
	A \in Abv_k}}
A(k) \ar[r]_-{\xi ^n _{\mathrm{ab} \ast}} & \Gamma (k, \mathcal E ^n _{\mathrm{ab}})}
\end{split}
\end{align}
with $\pi _{A, \psi}\circ < \psi >=\psi$, where $\pi _{A, \psi}:
\prod \limits_{\substack{\psi \in Reg ^n (X,A) \\
	A \in Abv_k}}
A(k)\rightarrow A(k)$ is the projection to the factor indexed by $(\psi , A)$,
and the
injectivity of $\xi ^n _{\mathrm{ab} \ast}$ follows from the distinguished triangle in the bottom
row of
 \eqref{diag.main.abeq}, since
$0= \Hom _{\DMeff{k}}(\sphere , \mathbf h _{-1}(\mathcal R )[-1])$.

\subsubsection{}  \label{ker.abeq}
In particular, we observe that $\gamma \in \mathrm{ker}(<\psi >)$ if and only
if  $\psi (\gamma)=0 \in A(k)$,
for every abelian variety $A\in SmProj_k$ and every
regular homomorphism $\psi \in Reg^n(X,A)$ 
\eqref{def.gp.reg.hom}.  Hence, $\mathrm{ker}(<\psi >)=CH^n_{
\mathrm{ab}}(X)$
\eqref{def.ab.eq}.

\begin{prop}  \label{prop.comp.ker.ab}
With the notation and conditions of \eqref{def.ab.eq} and
\eqref{ss.1st.step.abeq}-\eqref{diag.final.ind.abeq}.
Then, $\mathrm{ker}(<\psi >)=\mathrm{ker}(\Psi ^n _{\mathrm{ab} \ast})$.
\end{prop}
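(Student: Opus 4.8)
The plan is to follow, \emph{mutatis mutandis}, the proof of \eqref{prop.comp.ker}, replacing the single regular homomorphism $\psi$ by the family $<\psi >$, the diagram \eqref{diag.eff.regmot} by \eqref{diag.main.abeq}, and \eqref{diag.ind.reghom} by \eqref{diag.final.ind.abeq}. The inclusion $\mathrm{ker}(<\psi >)\subseteq \mathrm{ker}(\Psi ^n _{\mathrm{ab} \ast})$ is immediate from the commutativity of \eqref{diag.final.ind.abeq}: for $\gamma \in CH^n_{\mathrm{alg}}(X)$ with $<\psi >(\gamma)=0$ one gets $\Psi ^n_{\mathrm{ab} \ast}(\gamma)=\xi ^n_{\mathrm{ab} \ast}(<\psi >(\gamma))=0$.

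For the reverse inclusion, fix $\gamma \in \mathrm{ker}(\Psi ^n _{\mathrm{ab} \ast})\subseteq \Hom_{\DMeff{k}}(\sphere , \varphi _{d-n}(M(X)))$, identified via \eqref{rmk.comp.indmap.a} with an element of $CH^n(X)$. Since \eqref{diag.final.ind.abeq} is the diagram of abelian groups induced by \eqref{diag.main.abeq} and $\xi ^n_{\mathrm{ab} \ast}$ is injective (as recorded below \eqref{diag.final.ind.abeq}, using $\Hom_{\DMeff{k}}(\sphere , \mathbf{h}_{-1}(\mathcal R)[-1])=0$), it suffices to show that $\gamma$ lies in the image of $p_\ast$. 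Because the right column of \eqref{diag.main.abeq} is a distinguished triangle in $\DMeff{k}$, the vanishing $\Psi ^n_{\mathrm{ab} \ast}(\gamma)=0$ places $\gamma$ in the image of $k^n_{\mathrm{ab} \ast}\colon \Hom_{\DMeff{k}}(\sphere , \mathcal K^n_{\mathrm{ab}}(X)) \rightarrow \Hom_{\DMeff{k}}(\sphere , \varphi _{d-n}(M(X)))$. Finally, applying $\Hom_{\DMeff{k}}(\sphere , -)$ to the top row of \eqref{diag.main.abeq}, which is a distinguished triangle as well, identifies the image of $k^n_{\mathrm{ab} \ast}$ with the image of $p_\ast$, provided $\Hom_{\DMeff{k}}(\sphere , (\tau _{\geq 0}\mathcal R)[1])=0$; and this holds since $\spec{k}$ is a point for the Nisnevich topology. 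Combined with \eqref{ker.abeq}, which identifies $\mathrm{ker}(<\psi >)$ with $CH^n_{\mathrm{ab}}(X)$, this gives the proposition.

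I do not expect any genuine obstacle: the argument is formally identical to that of \eqref{prop.comp.ker}, only the triangulated bookkeeping being transported to the diagram \eqref{diag.main.abeq}. The two points worth a line of care are (i) that the relevant injectivity of $\xi ^n _{\mathrm{ab} \ast}$ truly needs only $\Hom_{\DMeff{k}}(\sphere , \mathbf{h}_{-1}(\mathcal R)[-1])=0$, and (ii) that the (possibly infinite) product $\mathfrak{Ab}$ behaves well under $\Hom_{\DMeff{k}}(\sphere , -)$, i.e.\ $\Hom_{\DMeff{k}}(\sphere , \mathfrak{Ab})\cong \prod A(k)$, both of which are already in place from \eqref{def.prod.abvar} and the discussion preceding \eqref{diag.final.ind.abeq}.
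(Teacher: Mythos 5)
Your proposal is correct and is exactly the argument the paper intends: the paper's proof of \eqref{prop.comp.ker.ab} simply says it is parallel to \eqref{prop.comp.ker}, and you have carried out that parallel argument faithfully (commutativity of \eqref{diag.final.ind.abeq} for one inclusion; injectivity of $\xi^n_{\mathrm{ab}\ast}$, the right-column and top-row triangles of \eqref{diag.main.abeq}, and $\Hom_{\DMeff{k}}(\sphere ,(\tau_{\geq 0}\mathcal R)[1])=0$ for the other). The two cautionary points you raise are indeed already settled by \eqref{def.prod.abvar} and the discussion following \eqref{diag.final.ind.abeq}.
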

\begin{proof}
This follows by an argument parallel to \eqref{prop.comp.ker}.
\end{proof}

\subsubsection{}  \label{not.thm1.ab}
With the notation and conditions of \eqref{def.ab.eq} and
\eqref{ss.1st.step.abeq}-\eqref{diag.final.ind.abeq}.  We will write
$M^n_{\mathrm{ab}}(X)\in \DMeff{k}$ for
$\mathcal K _{\mathrm{ab}}^n(X)(d-n)[2d-2n]$ \eqref{diag.main.abeq}, and let
$c_{\mathrm{ab}}^n: M^n_{\mathrm{ab}}(X)\rightarrow f_{d-n}M(X)$ be the map
$k^n_{\mathrm{ab}}(d-n)[2d-2n]$ in $\DMeff{k}$ (see \eqref{diag.main.abeq} and
\cite{MR3614974}*{3.3.3(2)}).

Let $z_{\mathrm{ab}}^n: M^n_{\mathrm{ab}}(X) \rightarrow M(X)$ be the following composition:
\begin{align*}
\xymatrix{M^n_{\mathrm{ab}}(X) \ar[r]^-{c_{\mathrm{ab}}^n}& f_{d-n}M(X) 
\ar[r]^-{\epsilon _{d-n}^{M(X)}}& M(X)}
\end{align*}
where $\epsilon _{d-n}$ is the counit of the adjunction $(i_{d-n}, r_{d-n})$
\eqref{eq.slice.filtration}.
Now, we consider
the  morphism of abelian groups induced by $z _{\mathrm{ab}}^n$:
\begin{align}  \label{diag.main.indmapAB}
\begin{split}
\xymatrix{\Hom _{\DMeff{k}}(\sphere (d-n)[2d-2n],
M^n_{\mathrm{ab}}(X)) \ar[d]^-{z _{\mathrm{ab} \ast}^n} \\
\Hom _{\DMeff{k}}(\sphere (d-n)[2d-2n], M(X))}
\end{split}
\end{align}

\begin{thm}   \label{thm.1.ab}
With the notation and conditions of \eqref{def.ab.eq} and
\eqref{ss.1st.step.abeq}-\eqref{diag.main.indmapAB}.
Under the isomorphism
$\Hom _{\DMeff{k}}(\sphere (d-n)[2d-2n], M(X))\cong CH^n(X)$
\cite{MR1764202}*{Prop. 2.1.4, Thm. 3.2.6},
the image of
$z _{\mathrm{ab} \ast}^n$  \eqref{diag.main.indmapAB} is the subgroup
of algebraic cycles abelian equivalent to zero:
$CH^n _{\mathrm{ab}}(X)\subseteq CH^n_{\mathrm{alg}}(X)$ \eqref{def.ab.eq}.
\end{thm}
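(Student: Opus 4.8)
The plan is to run the proof of \eqref{thm1} \emph{mutatis mutandis}, with the system $<\psi>$ of \eqref{def.prod.abvar} and the data $\mathcal K^n_{\mathrm{ab}}(X)$, $k^n_{\mathrm{ab}}$, $\Psi^n_{\mathrm{ab}}$, $\mathcal E^n_{\mathrm{ab}}$ of \eqref{diag.main.abeq} in place of $\psi$, $\mathcal K_\psi$, $k_\psi$, $\Psi$, $\mathcal E_\psi$ in \eqref{diag.eff.regmot}, using \eqref{prop.comp.ker.ab} and \eqref{ker.abeq} in place of \eqref{prop.comp.ker}.

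Concretely, I would first recall from \eqref{not.thm1.ab} that $M^n_{\mathrm{ab}}(X) = \mathcal K^n_{\mathrm{ab}}(X)(d-n)[2d-2n]$, that $c^n_{\mathrm{ab}} = k^n_{\mathrm{ab}}(d-n)[2d-2n]$, and that $z^n_{\mathrm{ab}} = \epsilon^{M(X)}_{d-n}\circ c^n_{\mathrm{ab}}$. The right column of \eqref{diag.main.abeq} is a distinguished triangle in $\DMeff{k}$, so applying the homological functor $\Hom_{\DMeff{k}}(\sphere, -)$ to it shows that the image of $k^n_{\mathrm{ab} \ast}\colon \Hom_{\DMeff{k}}(\sphere, \mathcal K^n_{\mathrm{ab}}(X)) \to \Hom_{\DMeff{k}}(\sphere, \varphi_{d-n}(M(X)))$ is exactly $\mathrm{ker}(\Psi^n_{\mathrm{ab} \ast})$. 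By \eqref{prop.comp.ker.ab} and \eqref{ker.abeq}, this kernel is the subgroup $CH^n_{\mathrm{ab}}(X)$ under the identification $\Hom_{\DMeff{k}}(\sphere, \varphi_{d-n}(M(X))) \cong CH^n(X)$ of \eqref{rmk.comp.indmap.a}.

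Then I would transport this computation along the twist-shift $(-)(d-n)[2d-2n]$ and the effective cover $\epsilon^{M(X)}_{d-n}$. By Voevodsky's cancellation theorem \cite{MR2804268} the twist-shift induces isomorphisms $\Hom_{\DMeff{k}}(\sphere(d-n)[2d-2n], M^n_{\mathrm{ab}}(X)) \cong \Hom_{\DMeff{k}}(\sphere, \mathcal K^n_{\mathrm{ab}}(X))$ and $\Hom_{\DMeff{k}}(\sphere(d-n)[2d-2n], f_{d-n}M(X)) \cong \Hom_{\DMeff{k}}(\sphere, \varphi_{d-n}(M(X)))$ (using $\varphi_{d-n}(M(X)) = f_{d-n}M(X)(-d+n)[-2d+2n]$ from \eqref{not.varphi}), under which $c^n_{\mathrm{ab} \ast}$ corresponds to $k^n_{\mathrm{ab} \ast}$; and since $\sphere(d-n)[2d-2n]\in\DMeff{k}(d-n)$, the universal property of the counit $\epsilon^{M(X)}_{d-n}\colon f_{d-n}M(X)\to M(X)$ \cite{MR3614974}*{3.3.1} makes $(\epsilon^{M(X)}_{d-n})_\ast$ an isomorphism on $\Hom_{\DMeff{k}}(\sphere(d-n)[2d-2n], -)$, and this is precisely the isomorphism exhibiting $\Hom_{\DMeff{k}}(\sphere(d-n)[2d-2n], M(X)) \cong CH^n(X)$ as the composite of \eqref{rmk.comp.indmap.a} with cancellation. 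Composing everything, the image of $z^n_{\mathrm{ab} \ast}$ is carried onto the image of $k^n_{\mathrm{ab} \ast}$, namely onto $CH^n_{\mathrm{ab}}(X)\subseteq CH^n_{\mathrm{alg}}(X)$, which is the assertion.

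The only delicate point is purely bookkeeping rather than a genuine obstacle: one must check that the three identifications just used (cancellation, the counit, and \eqref{rmk.comp.indmap.a}) are mutually compatible, so that ``image of $z^n_{\mathrm{ab} \ast}$'' and ``image of $k^n_{\mathrm{ab} \ast}$'' denote literally the same subgroup of $CH^n(X)$. Since this compatibility is exactly what is verified for $z_\psi$ in the proof of \eqref{thm1}, the cleanest write-up is to invoke that proof verbatim, substituting \eqref{prop.comp.ker.ab}, \eqref{ker.abeq} for \eqref{prop.comp.ker}.
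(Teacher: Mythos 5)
Your proposal is correct and follows essentially the same route as the paper: the right column of \eqref{diag.main.abeq} as a distinguished triangle, the identification $\mathrm{ker}(\Psi^n_{\mathrm{ab}\ast})=CH^n_{\mathrm{ab}}(X)$ via \eqref{ker.abeq}--\eqref{prop.comp.ker.ab}, and then transport along the twist-shift and the counit $\epsilon^{M(X)}_{d-n}$ using Voevodsky's cancellation theorem and \cite{MR3614974}*{3.3.1}. The only difference is that you spell out the bookkeeping that the paper's (terse) proof leaves implicit.
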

\begin{proof}
We recall that  the right column in
\eqref{diag.main.abeq} is a distinguished triangle in $\DMeff{k}$.  Then,
since $z_{\mathrm{ab}}^n = \epsilon _{d-n}^{M(X)} \circ k_{\mathrm{ab}}^n 
(d-n)[2d-2n]$,
we deduce the result by combining
\eqref{ker.abeq}-\eqref{prop.comp.ker.ab}, Voevodsky's cancellation theorem
\cite{MR2804268} and the universal property of the counit 
$\epsilon _{d-n}^{M(X)}:f_{d-n}M(X)\rightarrow M(X)$ \cite{MR3614974}*{3.3.1}. 
\end{proof}

\subsection{Effective covers and slices}
Consider the distinguished triangle in 
the right column of \eqref{diag.main.abeq}.  Since $f_m:\DM{k} \rightarrow
\DM{k}$ is a triangulated functor \eqref{eq.slice.filtration},
we obtain the following distinguished triangle in $\DM{k}$:
\begin{align}  \label{diag.dist.tr.thm2.ab}
\xymatrix{f_m(\mathcal K _{\mathrm{ab}}^n (X))\ar[rr]^-{f_m(k_{\mathrm ab}^n)} &&f _m(\varphi _{d-n}(M(X)))
\ar[rr]^-{f_m(\Psi ^n _{\mathrm{ab}})} && f_m (\mathcal E _{\mathrm{ab}}^n).}
\end{align}

\begin{thm}  \label{thm2.ab}
With the notation and conditions of 
\eqref{def.ab.eq},
\eqref{ss.1st.step.abeq}-\eqref{diag.main.indmapAB} and \eqref{diag.dist.tr.thm2.ab}.
\begin{enumerate}
\item \label{thm2.a.ab}For $m\geq 1$, $f_m(\mathcal E _{\mathrm{ab}}^n)\cong 0$.
\item  \label{thm2.b.ab}For $r\geq 1$, 
the map $f_{d-n+r}(z_{\mathrm{ab}}^n ): 
f_{d-n+r}(M^n_{\mathrm{ab}}(X))  \rightarrow
f_{d-n+r}(M(X))$ is an isomorphism in $\DMeff{k}$
\eqref{not.thm1.ab}.
\item  \label{thm2.c.ab}For $r\geq 1$, 
the map $s_{d-n+r}(z_{\mathrm{ab}}^n ):
s_{d-n+r}(M^n_{\mathrm{ab}}(X))  \rightarrow
s_{d-n+r}(M(X))$ is an isomorphism in $\DMeff{k}$
\eqref{diag.slice.tr}.
\end{enumerate}
\end{thm}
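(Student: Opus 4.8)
The plan is to reproduce, assertion by assertion, the argument that established \eqref{thm2}, substituting the data of \eqref{diag.main.abeq} for the data attached to a single regular homomorphism: the distinguished triangle \eqref{diag.dist.tr.thm2.ab} plays the role of \eqref{diag.dist.tr.thm2}, and the morphisms $k_{\mathrm{ab}}^n$, $c_{\mathrm{ab}}^n$, $z_{\mathrm{ab}}^n$ of \eqref{not.thm1.ab} play the roles of $k_\psi$, $c_\psi$, $z_\psi$. The only input that is not purely formal is that $\mathcal E_{\mathrm{ab}}^n\in\HINST{k}$ is a birational sheaf, and this has already been secured in \eqref{ss.ab.HI}.

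For the first assertion I would reduce to the case $m=1$: since $\DMeff{k}(m)\subseteq\DMeff{k}(1)$ for $m\geq 1$ \eqref{eq.slice.filtration}, one has $f_m\cong f_m\circ f_1$, so it suffices to check $f_1(\mathcal E_{\mathrm{ab}}^n)\cong 0$. Using that $\mathcal E_{\mathrm{ab}}^n$ is birational, \cite{MR1764200}*{Thm. 5.7} gives $H^i_{\mathrm{Nis}}(Z,\mathcal E_{\mathrm{ab}}^n)=0$ for every $Z\in Sm_k$ and $i>0$; then \cite{MR3737321}*{Cor. 4.5.3 and Lem. 4.5.4} together with the identification of $f_1$ coming from \cite{MR2600283}*{Lem. 5.9}, \cite{MR2249535}*{Prop. 1.1} forces $f_1(\mathcal E_{\mathrm{ab}}^n)=0$, exactly as in the proof of \eqref{thm2}. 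For the second assertion, applying the first to the triangle \eqref{diag.dist.tr.thm2.ab} shows that $f_r(k_{\mathrm{ab}}^n)$ is an isomorphism for $r\geq 1$; twisting by $(d-n)[2d-2n]$ and using $f_0(E(-m)[-2m])\cong f_m(E)(-m)[-2m]$ \cite{MR3614974}*{3.3.3(2)} upgrades this to the statement that $f_{d-n+r}(c_{\mathrm{ab}}^n)$ is an isomorphism; and since $f_{d-n+r}\cong f_{d-n+r}\circ f_{d-n}$, the universal property of the counit \cite{MR3614974}*{3.3.1} makes $f_{d-n+r}(\epsilon_{d-n}^{M(X)})$ an isomorphism, whence the composite $f_{d-n+r}(z_{\mathrm{ab}}^n)$ is an isomorphism in $\DMeff{k}$. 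The third assertion is then immediate because $s_m\cong s_m\circ f_m$ for $m\in\mathbb Z$.

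I do not expect a genuine obstacle: each step is a transcription of the corresponding step for \eqref{thm2}, and the one place where something substantive enters — the birationality of the error sheaf $\mathcal E_{\mathrm{ab}}^n$ — has been arranged in advance in \eqref{ss.ab.HI} (it reduces, via \cite{MR3737321}*{Prop. 2.6.2}, to $\mathfrak{Ab}$ and $\mathbf h_{-1}(\mathcal R)$ being birational). The only point that requires care is the bookkeeping: one must verify that the objects and maps produced by the octahedral construction of \eqref{ss.1st.step.abeq} genuinely satisfy the formal identities $M_{\mathrm{ab}}^n(X)=\mathcal K_{\mathrm{ab}}^n(X)(d-n)[2d-2n]$, $\varphi_{d-n}(M(X))=f_{d-n}(M(X))(-d+n)[-2d+2n]$ and $z_{\mathrm{ab}}^n=\epsilon_{d-n}^{M(X)}\circ k_{\mathrm{ab}}^n(d-n)[2d-2n]$ used in the proof of \eqref{thm2} — which is precisely what \eqref{not.thm1.ab} records.
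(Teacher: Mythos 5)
Your proposal is correct and follows essentially the same route as the paper: the paper's proof of \eqref{thm2.ab} is exactly the transcription of the proof of \eqref{thm2} you describe, with the birationality of $\mathcal E_{\mathrm{ab}}^n$ from \eqref{ss.ab.HI} supplying the one non-formal input for part (1), the triangle \eqref{diag.dist.tr.thm2.ab} together with the twist identities of \eqref{not.thm1.ab} and the counit's universal property giving part (2), and $s_m\cong s_m\circ f_m$ giving part (3).
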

\begin{proof}
\eqref{thm2.a.ab}:  This follows by an argument parallel to
\ref{thm2}\eqref{thm2.a}, observing that  
$\mathcal E _{\mathrm{ab}}^n \in \HINST{k}$ 
is a birational sheaf \eqref{ss.ab.HI}.

\eqref{thm2.b.ab}-\eqref{thm2.c.ab}: This
follows from \ref{thm2.ab}\eqref{thm2.a.ab} combined with
an argument parallel to
\ref{thm2}\eqref{thm2.b} and \ref{thm2}\eqref{thm2.c}, respectively.
\end{proof}


\bibliography{biblio_ab-equiv}
\bibliographystyle{abbrv}

\end{document}